\documentclass[10pt,reqno]{amsart}
\usepackage{amsmath,amssymb,amsfonts,amsthm,mathrsfs,enumitem}
\allowdisplaybreaks

\newtheorem{thm}{Theorem}[section]

\newtheorem{cor}[thm]{Corollary}
\newtheorem{lem}[thm]{Lemma}
\newtheorem{prop}[thm]{Proposition}

\theoremstyle{definition}
\newtheorem{defn}[thm]{Definition}
\theoremstyle{remark}
\newtheorem{rem}[thm]{Remark}
\newtheorem{exa}[thm]{Example}

\numberwithin{equation}{section}

\newcommand{\Z}{\mathbb Z}
\newcommand{\N}{\mathbb N}
\newcommand{\eps}{\varepsilon}
\newcommand{\Sub}{\operatorname{Sub}}
\newcommand{\one}{\mathbf 1}
\newcommand{\bits}{\{0,1\}^{*}}
\newcommand{\wt}{\operatorname{wt}}
\newcommand{\zz}{\operatorname{z}}
\newcommand{\oo}{\operatorname{o}}
\newcommand{\W}{\mathcal W}
\newcommand{\Lcal}{\mathcal L}
\newcommand{\Hcal}{\mathcal H}
\newcommand{\Dcal}{\mathcal D}
\newcommand{\Ccal}{\mathcal C}
\newcommand{\Bcal}{\mathcal B}
\newcommand{\Fcal}{\mathcal F}

\begin{document}

\title[Deficit bounds and equality cases in the Tu--Deng problem]
{Deficit bounds and equality cases in the Tu--Deng problem}
\author{Kaimin Cheng}
\address{School of Mathematical Sciences, China West Normal University, Nanchong 637002, P. R. China}
\email{ckm20@126.com}
\subjclass[2020]{Primary 05A20, 11A63; Secondary 68R05, 94A60}
\keywords{Tu--Deng conjecture, equality cases, binary words, subsequences, Hamming weight}
\date{}

\begin{abstract}
Let $N=2^k-1$, and let $S_{t,k}$ consist of the pairs
$0\le a,b<N$ such that $a+b\equiv t\pmod N$ and
$\wt(a)+\wt(b)<k$.  The Tu--Deng bound $|S_{t,k}|\le 2^{k-1}$ has
recently been proved. In this paper, we give a combinatorial proof of
the equality criterion.  If the $k$-bit cyclic word of $t$ has $Z$
zeros and $g_1,\ldots,g_Z$ are the numbers of ones between successive
zeros, then
$$|S_{t,k}|=2^{k-1}
\quad\Longleftrightarrow\quad
 g_i\ge Z-1\quad(1\le i\le Z).$$
Moreover, we recover the resulting enumeration of the equality parameters.
Beyond equality, if $R\ge Z\ge2$, where $R$ is the number of ones, then
every nonequality parameter satisfies
$$2^{k-1}-|S_{t,k}|\ge 2^{R-Z+1},$$
and we classify all cases in which this bound is attained.  For $R<Z$
we obtain a congruence for $|S_{t,k}|$ and a lower bound for the deficit
in terms of the number of cyclic runs of ones.
\end{abstract}

\maketitle

\section{Introduction}

Let $k\ge2$, put $N=2^k-1$, and let $\wt(n)$ denote the binary Hamming
weight of $n$.  For $1\le t\le N-1$, define
\[
 S_{t,k}
 =
 \bigl\{(a,b)\in\{0,\ldots,N-1\}^2:
 a+b\equiv t\pmod N,\ 
 \wt(a)+\wt(b)<k
 \bigr\}.
\]
The Tu--Deng conjecture asserts
\begin{equation}\label{eq:TD}
 |S_{t,k}|\le 2^{k-1}.
\end{equation}
It arose in the study of Boolean functions with optimal algebraic immunity
\cite{TuDeng2011}.  Flori, Randriambololona, Cohen and Mesnager
\cite{FloriRandriamCohenMesnager2010} found a family attaining equality and
conjectured that these are all equality cases.  Deng and Yuan
\cite{DengYuan2012} later stated the corresponding converse problem
explicitly.  Further partial results include
\cite{CusickLiStanica2011,ChengHongZhong2015,ChenLinWei2020}, while
Spiegelhofer and Wallner \cite{SpiegelhoferWallner2019} proved
\eqref{eq:TD} for almost all parameters and showed that it implies
Cusick's sum-of-digits conjecture.

Complete proofs of \eqref{eq:TD} appeared in 2026 by Liu, Luo and Xie
\cite{LiuLuoXie2026}, Cusick \cite{Cusick2026}, and Yuan, Hu, Zhang,
Zhang and Wu \cite{YuanEtAl2026}.  The first two proofs use, respectively,
a language of binary subsequences and a cyclic matrix polynomial.  The
third proof uses a different integral argument based on pivotal groups.
Yuan et al.\ also obtained the equality characterization and its
enumeration.  The present paper gives a different proof of that
characterization, identifies the coefficient arrays occurring in the
proofs of Liu--Luo--Xie and Cusick, and then studies the deficit from
the Tu--Deng bound.

Regard the $k$-bit word of $t$ as a cyclic word.  Let $R$ and $Z$ be its
numbers of ones and zeros, respectively, and let $g_1,\ldots,g_Z$ be the
numbers of ones between successive zeros.  Thus
\[
 g_1+\cdots+g_Z=R.
\]
For example, the cyclic word
\[
 0\,11\,0\,111\,0\,11
\]
has $Z=3$ and $(g_1,g_2,g_3)=(2,3,2)$.

Our main equality statement is the following.
\begin{thm}\label{thm:intro-equality}
Let $1\le t<2^k-1$.  Then
\[
 |S_{t,k}|=2^{k-1}
 \quad\Longleftrightarrow\quad
 g_i\ge Z-1\quad(1\le i\le Z).
\]
\end{thm}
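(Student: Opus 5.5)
Our plan is to reduce $|S_{t,k}|$ to a weighted count of binary words and then compare it termwise with $2^{k-1}$. First we fix notation and use the standard reformulation: $a+b\equiv t\pmod N$ with $a,b<N$ can be read as cyclic addition of $k$-bit words with end-around carry. For a pair $(a,b)$, the number of carries $c(a,b)$ in this cyclic addition satisfies $\wt(a)+\wt(b)=\wt(t)+c(a,b)$. Hence $(a,b)\in S_{t,k}$ if and only if $c<k-R=Z$. The carry pattern is a cyclic binary word $\gamma$, and $(a,b)$ is determined by $\gamma$ together with free choices at the positions where the digit sum is $1$. At a position $i$, let $t_i$ be the bit of $t$ and $\gamma_{i-1}$ the incoming carry. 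If $t_i=\gamma_{i-1}$, then $\gamma_i$ is forced to equal $\gamma_{i-1}$, with two choices of the digit pair when the digit sum is $1$ and one choice otherwise. If $t_i\ne\gamma_{i-1}$, then $\gamma_i$ is forced as well, with one choice. Thus
\[
|S_{t,k}|=\sum_{\gamma}\,2^{\#\{i:\,\text{digit sum at }i\text{ is }1\}}
\]
over admissible cyclic carry words with fewer than $Z$ carries, after excluding the degenerate all-ones representatives $a=N$ or $b=N$. This is presumably the "binary subsequence" language of Liu--Luo--Xie, and we would take the inequality \eqref{eq:TD} itself as known.

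Next we organize the sum by the positions of the carry runs. A carry run must begin at a position where $t_i=0$; this is the generate step, $1+1$. It then continues through ones of $t$, where each step gives a factor $2$ (the case $0+1$ with carry in). It continues through zeros of $t$ with a factor $1$ (the case $1+1$ with carry in). It stops at a position with $t_i=1$, where the digit sum is $1+1$ with carry in, giving $1+1+1$. I would make this precise so that $|S_{t,k}|$ becomes an explicit sum over choices of carry intervals in terms of the gaps $g_1,\dots,g_Z$. Outside the carry intervals, each one of $t$ contributes a factor $2$ and each zero a factor $1$.

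We then compare this sum with $2^{k-1}$ via an injection or bijection into $\{0,1\}^{k-1}$, or into half of $\{0,1\}^k$ using a complement involution. The key step is to show that the loss in the injection comes exactly from configurations in which some carry interval forces a run whose number of remaining carries exceeds what the constraint "fewer than $Z$ carries" allows inside a gap. Concretely, I would aim for an identity
\[
2^{k-1}-|S_{t,k}|=\sum_{i}\,\bigl(\text{nonnegative terms supported on } g_i< Z-1\bigr),
\]
for instance a sum over gaps $i$ and carry lengths $j$ with $g_i<j\le Z-1$ of positive powers of $2$.

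Given such an identity, both directions are immediate. If every $g_i\ge Z-1$, every correction term vanishes, so $|S_{t,k}|=2^{k-1}$. If some $g_i\le Z-2$, we exhibit one explicit configuration contributing a positive term, namely a carry run started at the zero preceding gap $i$ and of length $g_i+1\le Z-1$. This gives strict inequality. The case $Z=1$ is trivial, since then the condition is vacuous and the count reduces to $2^{k-1}$ directly.

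The main obstacle is establishing the exact deficit identity, or equivalently constructing the injection and pinpointing its image defect. I would first verify the formula numerically on small $k$, such as the example $0\,11\,0\,111\,0\,11$, to guess the right correction terms. Failing a clean identity, a fallback is to prove the two directions separately. Sufficiency would come from a bijection valid when all gaps are long, and necessity from perturbing the proof of \eqref{eq:TD} to show that a short gap creates a strictly lost configuration.
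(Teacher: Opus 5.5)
Your proposal is a plan, not a proof. Both directions are read off from a deficit identity that you never establish and whose form you intend to guess from numerical data. That identity, or equivalently an injection into $\{0,1\}^{k-1}$ with an exactly identified defect, is the whole content of the theorem. Knowing \eqref{eq:TD} as a black box says nothing about when it is sharp, and the ``fallback'' of perturbing its proof is left unspecified.

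The preliminary reduction is also not usable as written, because your local carry rules are inverted. From $a_i+b_i+\gamma_{i-1}=t_i+2\gamma_i$ you get the following. If $t_i\ne\gamma_{i-1}$, then $\gamma_i=\gamma_{i-1}$ is forced and the digit sum is $1$ (two choices). If $t_i=\gamma_{i-1}$, the outgoing carry is free, via $0+0$ or $1+1$. Consequently a carry passes through a zero of $t$ with factor $2$ and cannot stop there. At a one of $t$ it either continues via $1+1+1$ or stops via $0+0+1$. Your description swaps these roles.

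Moreover, the shape you suggest (a sum of positive powers of $2$ supported on short gaps) cannot hold when $R<Z$, because there the deficit is odd by \eqref{eq:low-arithmetic}. For example, $k=3$, $t=1$ gives $S_{1,3}=\{(0,1),(1,0),(4,4)\}$ and deficit $1$. Any correct identity must carry the correction $\one_{\{Z>R\}}$ of \eqref{eq:deficit-general}. Your carry-word sum is in the spirit of the Liu--Luo--Xie cyclic-carry enumerator $\mathscr C_v$, but the real work lies in their factorization \eqref{eq:LLX-factor}, which you would have to reprove or import.

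The missing idea is to split the deficit into level-wise nonnegative quantities, each with a rigid equality case. The paper starts from \eqref{eq:deficit-general} with $\Delta_\ell=2j_{\ell,\ell-1}(v)-j_{\ell,\ell}(v)$, where $j_{a,b}$ counts words of the $1$-deletion-closed language $\Lcal(v)$. Marked deletion gives $\Delta_\ell\ge0$. Equality holds exactly when the level $\Lcal(v)\cap\W_{\ell,\ell}$ is empty or all of $\W_{\ell,\ell}$ (Theorem~\ref{thm:deletion-equality}).

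For $R\ge Z\ge2$, every zero of $v$ must be retained at the top level $\ell=Z-1$. That level is therefore the box $\{\mathbf x:|\mathbf x|=Z-1,\ x_i\le g_i\}$ (Theorem~\ref{thm:top-box}). It is nonempty, and it is full iff it contains each $(Z-1)e_i$, i.e.\ iff all $g_i\ge Z-1$. Conversely, when all gaps are long, every level $\ell\le Z-1$ is full by Lemma~\ref{lem:coarse}, so every $\Delta_\ell$ vanishes. For $R<Z$ one has $\Delta_R=2j_{R,R-1}(v)\ge2$, which outweighs the $-1$ correction.

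Without some structure of this kind, there is no sum in which your proposed witness for necessity (a single carry run over a short gap) is a positive term.
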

The sufficiency of this condition was known from
\cite{FloriRandriamCohenMesnager2010}, and the converse was their
Conjecture~3.20.  The same characterization, together with the
enumeration derived from it, was also obtained in
\cite[Theorem~3.1.2 and Corollary~3.1.1]{YuanEtAl2026}.  In that paper a
cyclic gap is the distance between two consecutive zero positions, so
their condition that every gap is at least $Z$ is equivalent to the
run-length condition $g_i\ge Z-1$ used here.  The proof given here is
different and is useful for the quantitative questions below.

We also study how far a nonequality parameter must lie below the
Tu--Deng bound.  If $R\ge Z\ge2$, then
\[
 2^{k-1}-|S_{t,k}|\ge 2^{R-Z+1},
\]
and we determine exactly when equality holds in this estimate.  When
$R<Z$, we prove that equality in the Tu--Deng bound is impossible,
obtain a congruence for $|S_{t,k}|$, and give a lower bound for the
deficit in terms of the number of cyclic runs of ones.

The proof first identifies the coefficients used by Liu--Luo--Xie and
Cusick.  We then use a marked-deletion count and a direct description of
subsequences by run lengths.  Section~\ref{sec:prelim} recalls the two
coefficient formulations, Section~\ref{sec:bridge} identifies them,
Sections~\ref{sec:onesided}--\ref{sec:top} develop the combinatorial
tools, and Section~\ref{sec:equality} proves the equality and deficit
results.

\section{Two formulations of the Tu--Deng count}\label{sec:prelim}

For a finite binary word $u$, let $|u|_0$ and $|u|_1$ denote the
numbers of $0$'s and $1$'s in $u$, respectively, and write
\[
\zz(u)=|u|_0,\qquad \oo(u)=|u|_1.
\]
Throughout, $\N=\{0,1,2,\ldots\}$.  Whenever a binary word represents
an integer, we follow the least-significant-bit-first convention of
\cite{LiuLuoXie2026}; cyclic rotations are always taken in this reading
direction. We use the lexicographic order determined by $0<1$, with a
proper prefix smaller than the word that it prefixes.

\subsection{The Liu--Luo--Xie language}

For finite binary words $u$ and $v$, write $u\preceq v$ if $u$ is a
subsequence of $v$, and set
\[
 \Sub(v)=\{u:u\preceq v\}.
\]
For convenience, write
\[
 D(w)=\Sub(w)
\]
for the subsequence language generated by a finite binary word $w$.
For $b\in\{0,1\}$, define the directional boundary
\[
 \partial_bD(w)
 =
 \{ub:u\in D(w),\ ub\notin D(w)\}.
\]
Its bivariate weight enumerator is
\[
 B_w^b(X,Y)
 =
\sum_{y\in\partial_bD(w)}
X^{\zz(y)}Y^{\oo(y)}.
\]

Following \cite{LiuLuoXie2026}, for a word $v$ define
\[
 \Hcal(v)
 =
 \{y\in\partial_1D(v):y<_{\rm lex}v\},
 \qquad
 \Lcal(v)
 =
D(v)\mathbin{\dot\cup}\Hcal(v).
\]
The bivariate weight enumerator of $\Lcal(v)$ is
\[
 J_v(X,Y)
 =
\sum_{y\in\Lcal(v)}X^{\zz(y)}Y^{\oo(y)}
 =
\sum_{a,b\ge0}j_{a,b}(v)X^aY^b,
\]
so that $j_{a,b}(v)$ is the number of words in $\Lcal(v)$ containing
exactly $a$ zeros and $b$ ones.

For the remainder of this subsection, fix $1\le t<2^k-1$ and choose
a cyclic rotation of the $k$-bit word of $t$ of the form
$W=10v$. Put
\[
R=\oo(W),\qquad Z=\zz(W).
\]
The cyclic-carry enumerator of \cite{LiuLuoXie2026} is
\[
 \mathscr C_v(X,Y)
 =
B_{0v}^1(X,Y)+B_v^0(X,Y).
\]

We shall use the following facts from \cite{LiuLuoXie2026}.

\begin{prop}\label{prop:LLX}
The following statements hold.
\begin{enumerate}[label=\textup{(\roman*)},leftmargin=2.8em]
\item
The language $\Lcal(v)$ is closed under deletion of any single $1$.

\item
If
\[
 \Lambda=X+Y-1,
\]
then
\begin{equation}\label{eq:LLX-factor}
 \mathscr C_v(X,Y)
 =
 1+\Lambda J_v(X,Y)+X^ZY^R.
\end{equation}

\item
Define
\[
 \Phi_-\!\left(\sum_{a,b\ge0}c_{a,b}X^aY^b\right)
 =
\sum_{a>b}c_{a,b}2^{-a-b}.
\]
Then
\begin{equation}\label{eq:halfplane}
\frac{|S_{t,k}|}{2^k}
 =
 \Phi_-(\mathscr C_v).
\end{equation}
\end{enumerate}
\end{prop}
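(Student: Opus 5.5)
All three statements are imported from \cite{LiuLuoXie2026}. The plan is to re-derive (i) and (ii) directly from the definitions and to reduce (iii) to the carry analysis of that paper.

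\emph{Part (i).} The language $D(v)=\Sub(v)$ is closed under deleting any letter, so only words in $\Hcal(v)$ need attention. Take $y=u1\in\Hcal(v)$ and delete one $1$ from it.
\begin{itemize}[leftmargin=2.8em]
\item If the final $1$ is deleted, the result is $u\in D(v)$.
\item Otherwise the result has the form $y'=u'1$ with $u'\preceq u$, hence $u'\in D(v)$.
  \begin{itemize}
  \item If $u'1\in D(v)$ we are done.
  \item If not, then $y'\in\partial_1D(v)$, and it remains to check $y'<_{\rm lex}v$.
  \end{itemize}
\end{itemize}
For the lexicographic check, it suffices to delete the last $1$ of the run of ones containing the chosen $1$. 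This yields the same word $y'$, since deleting any $1$ of a run gives the same result. If that run is followed by a $0$, the first difference between $y'$ and $y$ is a position where $y'$ has $0$ and $y$ has $1$. If the run is terminal, $y'$ is a proper prefix of $y$. In both cases $y'<_{\rm lex}y<_{\rm lex}v$, so $y'\in\Hcal(v)$.

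\emph{Part (ii).} Start from the standard telescoping identity for any subsequence-closed (hence prefix-closed) finite language $D$. Every nonempty word of $D$ is $ub$ with $u\in D$. Every one-letter extension $ub$ of a word $u\in D$ lies either in $D\setminus\{\emptyset\}$ or in $\partial_0D\cup\partial_1D$. Writing $E_D$ for the enumerator of $D$, this gives
\[
(X+Y)E_D=(E_D-1)+B^0_D+B^1_D,
\qquad\text{that is,}\qquad
B^0_D+B^1_D=1+\Lambda E_D.
\]
Applying this to $D(v)$ handles the term $B^0_v$ together with $B^1_v$. The remaining task is to show that
\[
B^1_{0v}-B^1_v=\Lambda\sum_{y\in\Hcal(v)}X^{\zz(y)}Y^{\oo(y)}+X^ZY^R .
\]
For this I would compare $\partial_1D(0v)$ with $\partial_1D(v)$ via the decomposition $\Sub(0v)=\Sub(v)\cup 0\,\Sub(v)$. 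A word $w1$ lies in $\partial_1D(0v)$ exactly when it is obtained from a shorter boundary configuration by prepending $0$. Organizing the extra words by their first position of disagreement with $v$ should produce precisely the lexicographically smaller boundary words $\Hcal(v)$, weighted by $\Lambda$. The top word $W=10v$ contributes the single monomial $X^ZY^R$.

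\emph{Part (iii), and the main obstacle.} For (iii) I would follow \cite{LiuLuoXie2026}:
\begin{enumerate}[label=\textup{(\alph*)},leftmargin=2.8em]
\item Write $a+b\equiv t\pmod N$ digitwise, using the identity $\wt(a)+\wt(b)=\wt(t)+(\text{number of carries})$ for cyclic addition.
\item Encode each choice of $a$ by the cyclic carry pattern, read against the rotation $W=10v$.
\item Show that the carry patterns with a given number of zeros and ones are exactly enumerated by the boundary words in $\partial_1D(0v)\cup\partial_0D(v)$. Each such pattern has probability $2^{-a-b}$ when the digits of $a$ are uniform.
\item Observe that the condition $\wt(a)+\wt(b)<k$ translates into $a>b$ for the exponents.
\end{enumerate}
I expect the bijection in step (c) to be the main obstacle, together with the lexicographic bookkeeping in (ii). If a self-contained derivation becomes too long, I would simply cite the corresponding statements of \cite{LiuLuoXie2026}, since the proposition is stated only in order to fix notation for what follows.
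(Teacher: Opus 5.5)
\textbf{What the paper does.} The paper gives no proof of this proposition. All three parts are imported from \cite{LiuLuoXie2026}, as is Lemma~5.4 of that paper, which is used later. Your fallback of citing the corresponding statements is therefore exactly the paper's route, and for the paper's purposes nothing more is needed. What you add beyond that has to be judged on its own terms.

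\textbf{Part (i).} Your argument is correct and self-contained. Moving the deleted $1$ to the end of its run gives $y'<_{\rm lex}y<_{\rm lex}v$. One small point: if the chosen $1$ lies in the terminal run of $y$, deleting it yields $u\in D(v)$. So the ``proper prefix'' subcase never actually reaches the lexicographic check.

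\textbf{Part (ii).} The telescoping identity $B^0_D+B^1_D=1+(X+Y-1)E_D$ is correct, and it correctly reduces \eqref{eq:LLX-factor} to
\[
B^1_{0v}-B^1_v=(X+Y-1)E_{\Hcal(v)}+X^ZY^R .
\]
This identity is the entire content of the Liu--Luo--Xie factorization, however, and your sketch does not establish it.

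The mechanism you describe is also inaccurate. Words of $\partial_1D(0v)$ beginning with $1$ are not obtained by prepending $0$. For $v=10$, both $11$ and $101$ lie in $\partial_1D(010)$. The exact decomposition is
\[
\partial_1D(0v)=\{y\in\partial_1D(v):y\text{ begins with }1\}\mathbin{\dot\cup}0\,\partial_1D(v).
\]
So what remains to be proved is
\[
XB^1_v-B^{1,0}_v=(X+Y-1)E_{\Hcal(v)}+X^ZY^R,
\]
where $B^{1,0}_v$ enumerates the words of $\partial_1D(v)$ beginning with $0$.

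The monomial $X^ZY^R$ is contributed by $0v1\in0\,\partial_1D(v)$, which is a rotation of $W$. The word $W=10v$ itself never lies in $\partial_1D(0v)$.

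Because of the $-1$ in $X+Y-1$, ``organizing by first disagreement'' cannot simply be a weight-preserving bijection. You would need one of the following, and neither is supplied:
\begin{itemize}
\item a bijective proof of the positive rearrangement $XB^1_v+E_{\Hcal(v)}=B^{1,0}_v+(X+Y)E_{\Hcal(v)}+X^ZY^R$; note that $\Hcal(v)$ need not consist of words starting with $0$, since for $v=110$ one has $101\in\Hcal(v)$;
\item an induction on $v$ using \eqref{eq:B-recurrence}.
\end{itemize}

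\textbf{Part (iii).} This is an outline, not a proof. Your step (a) correctly turns $\wt(a)+\wt(b)<k$ into ``at most $Z-1$ cyclic carries''. Converting that into the exponent condition $\zz(y)>\oo(y)$ on the words of $\partial_1D(0v)\cup\partial_0D(v)$, with weights $2^{-\zz(y)-\oo(y)}$, is precisely the Liu--Luo--Xie carry analysis. Steps (c) and (d) assert this conversion rather than derive it. Also, in (c)--(d) the letters $a,b$ are doing double duty as the integers of $S_{t,k}$ and as exponents.

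\textbf{Overall.} Nothing you actually prove is wrong. Parts (ii) and (iii), however, rest on the citation exactly as they do in the paper, and your re-derivation of (ii) contains an incorrect description of $\partial_1D(0v)$ that would need fixing before it could be completed.
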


For $\ell\ge1$, put
\[
 \Delta_\ell(v)
 =
 2j_{\ell,\ell-1}(v)-j_{\ell,\ell}(v).
\]
By the deletion inequality of \cite{LiuLuoXie2026}, one has
\[
\Delta_\ell(v)\ge0.
\]

\begin{prop}\label{prop:deficit}
The following identity holds:
\begin{equation}\label{eq:deficit-general}
 2^{k-1}-|S_{t,k}|+\one_{\{Z>R\}}
 =
 \sum_{\ell\ge1}2^{k-2\ell-1}\Delta_\ell(v).
\end{equation}
Moreover,
\[
 \Delta_\ell(v)=0
 \qquad\text{for }\ell>\min\{R,Z-1\}.
\]
\end{prop}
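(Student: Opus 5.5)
The plan is to get the identity \eqref{eq:deficit-general} by applying $\Phi_-$ term by term to the factorization \eqref{eq:LLX-factor}, and then to get the vanishing statement by bounding the numbers of zeros and ones of words in $\Lcal(v)$.

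\emph{The identity.} The language $\Lcal(v)$ is finite, so $J_v$ is a polynomial and $\Phi_-$ is a finite linear functional on it. By Proposition~\ref{prop:LLX}(iii) and (ii),
\[
 \frac{|S_{t,k}|}{2^k}=\Phi_-(1)+\Phi_-(\Lambda J_v)+\Phi_-(X^ZY^R).
\]
The constant term contributes $\Phi_-(1)=0$, since $a=b=0$ does not satisfy $a>b$. The last term contributes $\Phi_-(X^ZY^R)=2^{-k}\one_{\{Z>R\}}$, because $Z+R=k$.

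For the middle term, the coefficient of $X^aY^b$ in $\Lambda J_v$ is $j_{a-1,b}+j_{a,b-1}-j_{a,b}$. We collect, for each $(a,b)$, the total weight that $j_{a,b}$ receives under $\Phi_-$:
\begin{itemize}
\item from the $X$-shift at $(a+1,b)$, weight $2^{-a-b-1}$ whenever $a\ge b$;
\item from the $Y$-shift at $(a,b+1)$, weight $2^{-a-b-1}$ whenever $a>b+1$;
\item from the $-1$ term, weight $-2^{-a-b}$ whenever $a>b$.
\end{itemize}
Adding these up gives four cases:
\begin{itemize}
\item for $a>b+1$ the net weight is $0$;
\item for $a<b$ it is $0$;
\item for $a=b=\ell$ it is $2^{-2\ell-1}$;
\item for $(a,b)=(\ell,\ell-1)$ it is $2^{-2\ell}-2^{-2\ell+1}=-2^{-2\ell}$.
\end{itemize}
The term $\ell=0$ involves $j_{0,0}(v)=1$ (the empty word lies in $D(v)$) and contributes $\tfrac12$. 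Hence
\[
 \Phi_-(\Lambda J_v)=\tfrac12-\sum_{\ell\ge1}2^{-2\ell-1}\bigl(2j_{\ell,\ell-1}(v)-j_{\ell,\ell}(v)\bigr).
\]
Multiplying by $2^k$ and rearranging gives \eqref{eq:deficit-general}.

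\emph{Vanishing for $\ell>Z-1$ and for $\ell>R+1$.} Since $W=10v$, the word $v$ has $Z-1$ zeros and $R-1$ ones. Every word of $D(v)$ therefore has at most $Z-1$ zeros and at most $R-1$ ones. A word $y=u1$ of $\Hcal(v)$, with $u\in D(v)$, has at most $Z-1$ zeros and at most $R$ ones. Consequently:
\begin{itemize}
\item $j_{\ell,\cdot}(v)=0$ when $\ell>Z-1$;
\item $j_{\ell,\ell}(v)=0$ when $\ell>R$;
\item $j_{\ell,\ell-1}(v)=0$ when $\ell>R+1$.
\end{itemize}

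\emph{The remaining case $\ell=R+1$ (main obstacle).} Here we must show that no $y\in\Hcal(v)$ has exactly $R$ ones. Suppose $y=u1\in\Hcal(v)$ with $\oo(y)=R$, so $\oo(u)=R-1=\oo(v)$. Since $y\notin D(v)$, $y$ is not a proper prefix of $v$. Hence $y<_{\rm lex}v$ forces a first difference, giving
\[
 y=p0y',\qquad v=p1v'.
\]
Now embed $u$ into $v$ greedily. The prefix $p0$ must use a zero lying in $v'$, so the remainder of $u$ embeds into a suffix of $v'$, which has at most $\oo(v')$ ones. On the other hand, counting ones in $u=p0y''$ with $y'=y''1$ gives
\[
 \oo(y'')=\oo(u)-\oo(p)=\oo(v)-\oo(p)=\oo(v')+1,
\]
which is a contradiction. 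Thus $j_{R+1,R}(v)=0$.

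Combining these cases, $\Delta_\ell(v)=0$ for all $\ell>\min\{R,Z-1\}$.
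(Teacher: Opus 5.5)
Your proof is correct and follows the paper's route. For the identity, collecting the weight that each $j_{a,b}(v)$ receives is exactly the paper's evaluation of $\Phi_-(\Lambda X^aY^b)$ monomial by monomial, and the vanishing comes, as in the paper, from counting the zeros and ones of words in $\Lcal(v)$. The one difference is in the case $\ell=R+1$: the paper cites Lemma~5.4 of \cite{LiuLuoXie2026} for $\deg_Y J_v\le R-1$, whereas you prove the needed fact directly (no word of $\Hcal(v)$ has $\oo(v)+1$ ones) using the first lexicographic difference $y=p0y'$, $v=p1v'$ and an embedding count, which makes your argument self-contained.
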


\begin{proof}
For a monomial $X^aY^b$, direct evaluation gives
\[
 \Phi_-(\Lambda X^aY^b)
 =
 \begin{cases}
 2^{-2\ell-1},&a=b=\ell,\\
 -2^{-2b-2},&a=b+1,\\
 0,&\text{otherwise}.
 \end{cases}
\]
Since $j_{0,0}=1$, it follows that
\[
 \Phi_-(\Lambda J_v)
 =
 \frac12
 -
 \sum_{\ell\ge1}2^{-2\ell-1}\Delta_\ell(v).
\]
The exceptional monomial $X^ZY^R$ contributes $2^{-k}$ if $Z>R$
and contributes $0$ otherwise.  Combining this with
\eqref{eq:LLX-factor} and \eqref{eq:halfplane}, and multiplying by
$2^k$, gives \eqref{eq:deficit-general}.

Now $v$ contains $Z-1$ zeros.  Every word of $\Lcal(v)$ also has at
most $Z-1$ zeros, because the extra boundary letter is a $1$.
Hence $j_{\ell,\ell}=j_{\ell,\ell-1}=0$ for $\ell>Z-1$.  If $R<Z$,
then $v$ has $R-1$ ones, and Lemma~5.4 of \cite{LiuLuoXie2026} gives
$\deg_YJ_v\le R-1$.  Thus $j_{\ell,\ell}=0$ for $\ell\ge R$ and
$j_{\ell,\ell-1}=0$ for $\ell>R$, proving the last assertion.
\end{proof}

Factoring out the smallest power of $2$ gives the following formulas.

\begin{cor}\label{cor:divisibility}
If $R\ge Z$ and $m=Z-1$, then
\begin{equation}\label{eq:deficit-factor-plus}
 2^{k-1}-|S_{t,k}|
 =
 2^{R-Z+1}
 \sum_{\ell=1}^{m}4^{m-\ell}\Delta_\ell(v).
\end{equation}
If $R<Z$, then
\begin{equation}\label{eq:deficit-factor-minus}
 2^{k-1}-|S_{t,k}|+1
 =
 2^{Z-R-1}
 \sum_{\ell=1}^{R}4^{R-\ell}\Delta_\ell(v).
\end{equation}
\end{cor}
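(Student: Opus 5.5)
This is a direct specialization of Proposition~\ref{prop:deficit}: we truncate the sum in \eqref{eq:deficit-general} and factor out its smallest power of $2$. The only input is $k=R+Z$, since the $k$-bit word of $t$ has $R$ ones and $Z$ zeros.

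\emph{Case $R\ge Z$.} The indicator $\one_{\{Z>R\}}$ vanishes. Here $\min\{R,Z-1\}=Z-1=m$, so by Proposition~\ref{prop:deficit} only the indices $1\le \ell\le m$ contribute. Each surviving exponent $k-2\ell-1$ is smallest at $\ell=m$, where it equals
\[
k-2(Z-1)-1=R-Z+1 .
\]
Writing
\[
2^{k-2\ell-1}=2^{R-Z+1}\cdot 2^{2(m-\ell)}=2^{R-Z+1}4^{m-\ell}
\]
gives \eqref{eq:deficit-factor-plus}. We should check that $R-Z+1\ge1$ and that all exponents are nonnegative, which holds because $\ell\le m$.

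\emph{Case $R<Z$.} Now the indicator equals $1$. Also $\min\{R,Z-1\}=R$, because $R\le Z-1$. So the sum runs over $1\le\ell\le R$. The minimal exponent occurs at $\ell=R$ and equals
\[
k-2R-1=Z-R-1\ge0 .
\]
Factoring as
\[
2^{k-2\ell-1}=2^{Z-R-1}4^{R-\ell}
\]
gives \eqref{eq:deficit-factor-minus}.

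There is no real obstacle here. The only points needing care are the truncation range coming from the vanishing statement in Proposition~\ref{prop:deficit}, and the identity $k=R+Z$, which relies on $W$ being a rotation of the full $k$-bit word.
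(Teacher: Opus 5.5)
Your proposal is correct and follows the paper's argument: use the vanishing range $\ell\le\min\{R,Z-1\}$ from Proposition~\ref{prop:deficit} to truncate \eqref{eq:deficit-general}, substitute $k=R+Z$, and factor out the smallest power of $2$. The paper also spells out the case $R\ge Z=1$, where $m=0$, the sum is empty and both sides are $0$; your phrase ``smallest at $\ell=m$'' says nothing in that case, but the identity still holds trivially.
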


\begin{proof}
This follows by factoring the smallest power of $2$ from
\eqref{eq:deficit-general}.  When $R\ge Z=1$, the sum in
\eqref{eq:deficit-factor-plus} is empty and both sides are zero.
\end{proof}

\subsection{Cusick's cyclic polynomial}

For variables $u,v$, Cusick \cite{Cusick2026} uses
\[
 C_0(u,v)
 =
\begin{pmatrix}1&0\\u&v\end{pmatrix},
 \qquad
 C_1(u,v)
 =
\begin{pmatrix}u&v\\0&1\end{pmatrix}.
\]
For a cyclic word
\[
W=\eps_0\eps_1\cdots\eps_{k-1}
\]
with $R$ ones and $Z$ zeros, put
\[
 M_W(u,v)
 =
 C_{\eps_0}(u,v)\cdots C_{\eps_{k-1}}(u,v).
\]
Cusick defines $H_W$ by
\begin{equation}\label{eq:Cusick-H}
\operatorname{tr}M_W(u,v)-u^Rv^Z
 =
 1+(u+v-1)H_W(u,v),
\end{equation}
and writes
\[
 H_W(u,v)
 =
\sum_{p,q\ge0}h_{p,q}(W)u^pv^q.
\]
He proves that
\[
 h_{p,q}(W)\in\mathbb Z_{\ge0}
\qquad(p,q\ge0),
\]
and, in particular,
\[
 h_{\ell,\ell}(W)
 \le
 2h_{\ell-1,\ell}(W)
\qquad(\ell\ge1).
\]

\section{Identifying the two coefficient formulations}\label{sec:bridge}

The purpose of this section is to show that the coefficients used in the
two recent proofs count the same objects.  Once this is established, we
will use the language \(\Lcal(v)\), where deletion has a direct
combinatorial meaning.

We first rewrite the Liu--Luo--Xie boundary recurrence in matrix form.
Recall the polynomials $B_w^0(X,Y)$ and $B_w^1(X,Y)$ defined in
Section~\ref{sec:prelim}, and set
\[
 B_w
 =
\begin{pmatrix}
 B_w^0\\
 B_w^1
 \end{pmatrix}.
\]
For the empty word $\eps$,
\[
 B_\eps
 =
\begin{pmatrix}
 X\\
 Y
 \end{pmatrix}.
\]

Following \cite{LiuLuoXie2026}, define
\[
 A_0(X,Y)
 =
\begin{pmatrix}
 X&0\\
 Y&1
 \end{pmatrix},
 \qquad
 A_1(X,Y)
 =
\begin{pmatrix}
 1&X\\
 0&Y
 \end{pmatrix}.
\]
Then, for $w=w_1\cdots w_n$, the directional-boundary recurrence takes
the matrix form
\begin{equation}\label{eq:B-recurrence}
 B_w
 =
 A_{w_n}\cdots A_{w_1}B_\eps.
\end{equation}
Recall also that
\[
 \mathscr C_v(X,Y)
 =
B_{0v}^1(X,Y)+B_v^0(X,Y).
\]

Finally, put
\[
 P=
\begin{pmatrix}
 0&1\\
 1&0
 \end{pmatrix}.
\]

\begin{lem}\label{lem:matrix-conj}
For $b\in\{0,1\}$,
\[
 C_b(Y,X)=P A_b(X,Y)^T P.
\]
\end{lem}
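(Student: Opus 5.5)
This is a direct matrix computation, done separately for $b=0$ and $b=1$. First we record the general rule for conjugation by $P$. For any $2\times2$ matrix $M=\begin{pmatrix}m_{11}&m_{12}\\ m_{21}&m_{22}\end{pmatrix}$, the product $PMP$ reverses both the row order and the column order, so
\[
 PMP=\begin{pmatrix}m_{22}&m_{21}\\ m_{12}&m_{11}\end{pmatrix}.
\]
Consequently
\[
 PM^TP=\begin{pmatrix}m_{22}&m_{12}\\ m_{21}&m_{11}\end{pmatrix};
\]
that is, $PM^TP$ swaps the two diagonal entries and leaves the off-diagonal entries where they are.

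For $b=0$, apply this rule to $A_0(X,Y)=\begin{pmatrix}X&0\\ Y&1\end{pmatrix}$. The result is $\begin{pmatrix}1&0\\ Y&X\end{pmatrix}$. By definition, $C_0(u,v)=\begin{pmatrix}1&0\\ u&v\end{pmatrix}$, so $C_0(Y,X)=\begin{pmatrix}1&0\\ Y&X\end{pmatrix}$, and the two agree.

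For $b=1$, apply the rule to $A_1(X,Y)=\begin{pmatrix}1&X\\ 0&Y\end{pmatrix}$. The result is $\begin{pmatrix}Y&X\\ 0&1\end{pmatrix}$. On the other side, $C_1(u,v)=\begin{pmatrix}u&v\\ 0&1\end{pmatrix}$ gives $C_1(Y,X)=\begin{pmatrix}Y&X\\ 0&1\end{pmatrix}$, which is the same matrix.

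There is no real obstacle here. The only point needing care is the variable swap: $u$ corresponds to $Y$ and $v$ to $X$, in accordance with the substitution $(u,v)=(Y,X)$ in the statement. With that convention fixed, both identities follow by the entrywise comparison above.
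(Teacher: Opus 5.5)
Your proposal is correct and follows the paper's approach, which is the same direct entrywise multiplication. Your rule that $PM^TP$ swaps the two diagonal entries and fixes the off-diagonal ones checks out, and with it both cases $b=0$ and $b=1$ match $C_b(Y,X)$.
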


\begin{proof}
This is a direct multiplication.
\end{proof}

For $v=v_1\cdots v_n$, write
\[
 T_v=A_{v_n}\cdots A_{v_1}.
\]

\begin{lem}\label{lem:trace-carry}
The Liu--Luo--Xie carry polynomial satisfies
\[
 \mathscr C_v(X,Y)
 =
 \operatorname{tr}(T_vA_0A_1).
\]
\end{lem}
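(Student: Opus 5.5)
The plan is to unwind both sides using the matrix recurrence \eqref{eq:B-recurrence} and compare the trace with the two boundary enumerators column by column. Write $e_1=(1,0)^T$ and $e_2=(0,1)^T$, so that $B_w^0=e_1^TB_w$ and $B_w^1=e_2^TB_w$. By \eqref{eq:B-recurrence},
\[
 B_v=T_vB_\eps,
 \qquad
 B_{0v}=A_{v_n}\cdots A_{v_1}A_0B_\eps=T_vA_0B_\eps,
\]
since the first letter of $0v$ is $0$, and so $A_0$ is applied first, i.e.\ it sits rightmost in the product. Hence
\[
 \mathscr C_v=e_2^TT_vA_0B_\eps+e_1^TT_vB_\eps .
\]

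On the other side we expand the trace as
\[
 \operatorname{tr}(T_vA_0A_1)=e_1^TT_vA_0A_1e_1+e_2^TT_vA_0A_1e_2 ,
\]
and evaluate the two columns of $A_0A_1$ directly. The second column is
\[
 A_1e_2=\begin{pmatrix}X\\Y\end{pmatrix}=B_\eps ,
\]
so $A_0A_1e_2=A_0B_\eps$, and the second trace term equals $e_2^TT_vA_0B_\eps=B^1_{0v}$. The first column is $A_1e_1=e_1$, and then
\[
 A_0e_1=\begin{pmatrix}X\\Y\end{pmatrix}=B_\eps .
\]
So the first trace term equals $e_1^TT_vB_\eps=B^0_v$. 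Adding the two terms gives $\mathscr C_v$, as claimed.

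There is no real obstacle. The only point requiring care is the ordering convention in \eqref{eq:B-recurrence}: the letters are applied right to left, so a prefixed $0$ contributes a factor $A_0$ on the right of $T_v$. I would also check the small identities $A_1e_1=e_1$, $A_1e_2=B_\eps$ and $A_0e_1=B_\eps$ explicitly from the definitions of $A_0$ and $A_1$. Lemma~\ref{lem:matrix-conj} is not needed here; it will only be used afterwards to transfer this trace identity to Cusick's $\operatorname{tr}M_W$.
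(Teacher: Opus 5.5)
Your proof is correct and is essentially the paper's argument. The paper also writes $B_v=T_vB_\eps$ and $B_{0v}=T_vA_0B_\eps$ and reads off the two diagonal entries of $T_vA_0A_1$; it just does so with explicit entries $p,q,r,s$ of $T_v$, rather than through your observation that the columns of $A_0A_1$ are exactly $B_\eps$ and $A_0B_\eps$.
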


\begin{proof}
By \eqref{eq:B-recurrence},
\[
 B_v=T_vB_\eps,
 \qquad
 B_{0v}=T_vA_0B_\eps.
\]
Writing
\[
 T_v=
 \begin{pmatrix}p&q\\r&s\end{pmatrix},
\]
we have
\[
 B_v^0=pX+qY
\]
and
\[
 B_{0v}^1=rX^2+s(XY+Y).
\]
On the other hand,
\[
 A_0A_1
 =
 \begin{pmatrix}
 X&X^2\\
 Y&XY+Y
 \end{pmatrix},
\]
so
\[
 \operatorname{tr}(T_vA_0A_1)
 =
 pX+qY+rX^2+s(XY+Y)
 =
 B_v^0+B_{0v}^1
 =
 \mathscr C_v,
\]
as desired.
\end{proof}

\begin{thm}\label{thm:bridge}
Let $W=10v$. Then we have
\begin{equation}\label{eq:bridge}
H_W(Y,X)=J_v(X,Y).
\end{equation}
Equivalently, for all $j,q\ge0$,
\begin{equation}\label{eq:bridge-coeff}
 h_{q,j}(W)
 =
 j_{j,q}(v)
 =
 |\{y\in\Lcal(v):\zz(y)=j,\ \oo(y)=q\}|.
\end{equation}
\end{thm}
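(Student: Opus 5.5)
Both sides are defined by almost the same identity, so the plan is to compare the two trace expressions and then cancel the common factor $\Lambda=X+Y-1$. The main work is to show that Cusick's trace, with $(u,v)=(Y,X)$, equals the Liu--Luo--Xie carry polynomial $\mathscr C_v(X,Y)$.

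First, transpose the trace with Lemma~\ref{lem:matrix-conj}. Write $W=\eps_0\cdots\eps_{k-1}$ with $\eps_0=1$, $\eps_1=0$ and $\eps_2\cdots\eps_{k-1}=v=v_1\cdots v_n$. Then
\[
M_W(Y,X)=C_1(Y,X)\,C_0(Y,X)\,C_{v_1}(Y,X)\cdots C_{v_n}(Y,X).
\]
Substituting $C_b(Y,X)=PA_b^TP$ and using $P^2=I$ gives
\[
M_W(Y,X)=P\bigl(A_{v_n}\cdots A_{v_1}A_0A_1\bigr)^TP=P\,(T_vA_0A_1)^T\,P.
\]
Trace is invariant under transposition and under conjugation by $P$. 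Hence
\[
\operatorname{tr}M_W(Y,X)=\operatorname{tr}(T_vA_0A_1)=\mathscr C_v(X,Y)
\]
by Lemma~\ref{lem:trace-carry}.

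Next, compare the two defining identities. Since $W$ has $R$ ones and $Z$ zeros, \eqref{eq:Cusick-H} at $(u,v)=(Y,X)$ reads
\[
\operatorname{tr}M_W(Y,X)-Y^RX^Z=1+(X+Y-1)H_W(Y,X).
\]
Proposition~\ref{prop:LLX}(ii) gives
\[
\mathscr C_v=1+\Lambda J_v+X^ZY^R.
\]
Subtracting one from the other yields $\Lambda\bigl(H_W(Y,X)-J_v(X,Y)\bigr)=0$ in $\mathbb Z[X,Y]$. Because $\Lambda\neq0$ and the ring is an integral domain, we get \eqref{eq:bridge}. Here $H_W$ is a polynomial by Cusick's definition, and $J_v$ is a polynomial because $\Lcal(v)$ is finite.

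For \eqref{eq:bridge-coeff}, compare coefficients of $X^jY^q$. On the left this coefficient is $h_{q,j}(W)$; on the right it is $j_{j,q}(v)$, which counts the words of $\Lcal(v)$ with $j$ zeros and $q$ ones.

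The main obstacle is bookkeeping in the first step. One must check that the reading order of $W$ (least-significant bit first, so $W=10v$ puts $C_1C_0$ first) matches the reversed product $T_v=A_{v_n}\cdots A_{v_1}$ after transposition, so that the factor $A_0A_1$ lands on the correct side. One must also confirm the variable swap $(u,v)\mapsto(Y,X)$ in Lemma~\ref{lem:matrix-conj}. If the order came out reversed, cyclic invariance of the trace would still place $A_0A_1$ adjacent to $T_v$ in the needed order, so the conclusion is robust.
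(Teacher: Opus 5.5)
Your proposal is correct and follows the paper's proof essentially step for step: conjugating by $P$ via Lemma~\ref{lem:matrix-conj} gives $\operatorname{tr}M_W(Y,X)=\operatorname{tr}(T_vA_0A_1)=\mathscr C_v(X,Y)$, and subtracting the two factorizations and cancelling $X+Y-1$ in $\Z[X,Y]$ yields \eqref{eq:bridge}.

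One small point concerns your closing fallback, which is not needed because your direct computation already matches the definition of $M_W$. Cyclic invariance alone cannot reverse the order of the factors in $T_v$, so a reversed reading would require reversal invariance of the trace. That invariance does hold here, because $Q=\left(\begin{smallmatrix}Y(1-Y)&XY\\XY&X(1-X)\end{smallmatrix}\right)$ satisfies $QA_bQ^{-1}=A_b^T$ for $b=0,1$.
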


\begin{proof}
Write $v=v_1\cdots v_n$.  Lemma~\ref{lem:matrix-conj} gives
\[
 M_{10v}(Y,X)
 =
 P\bigl(A_{v_n}\cdots A_{v_1}A_0A_1\bigr)^TP
 =
 P(T_vA_0A_1)^TP.
\]
Therefore
\[
 \operatorname{tr}M_W(Y,X)
 =
 \operatorname{tr}(T_vA_0A_1)
 =
 \mathscr C_v(X,Y),
\]
where the chosen reading convention and cyclic invariance of trace fix the
orientation unambiguously.  By \eqref{eq:Cusick-H},
\[
 \operatorname{tr}M_W(Y,X)
 =
 1+(X+Y-1)H_W(Y,X)+X^ZY^R.
\]
By \eqref{eq:LLX-factor},
\[
 \mathscr C_v(X,Y)
 =
 1+(X+Y-1)J_v(X,Y)+X^ZY^R.
\]
Subtracting and cancelling the nonzero polynomial $X+Y-1$ in
$\Z[X,Y]$ proves \eqref{eq:bridge}.  Comparing coefficients gives
\eqref{eq:bridge-coeff}.
\end{proof}

By Theorem~\ref{thm:bridge},
\[
 \Delta_\ell(v)=2h_{\ell-1,\ell}(W)-h_{\ell,\ell}(W).
\]
The right-hand side depends only on the cyclic word $W$, and from this
point onward we write this common quantity as $\Delta_\ell(W)$.

\begin{rem}
Theorem~\ref{thm:bridge} explains why the coefficient inequality of
\cite{Cusick2026} and the deletion inequality of
\cite{LiuLuoXie2026} have the same shape: they are statements about
the same coefficients.
\end{rem}

\section{Marked deletion and its equality case}
\label{sec:onesided}

We now isolate the deletion argument needed later.  The inequality is
elementary: delete a marked \(1\) and count the possible insertion
slots in the resulting word.  The important point for the present
paper is its equality case.

Let \(\bits\) denote the set of all finite binary words, including the
empty word.

\begin{defn}
A finite binary language $\Lcal\subseteq\bits$ is
\emph{$1$-deletion-closed} if, whenever $w\in\Lcal$ and $w'$ is
obtained from $w$ by deleting a single $1$, then
$w'\in\Lcal$.
\end{defn}

For $a,b\ge0$, put
\[
 \W_{a,b}
 =
 \{w\in\bits:|w|_0=a,\ |w|_1=b\},
 \qquad
 \Lcal_{a,b}
 =
 \Lcal\cap\W_{a,b}.
\]

\begin{prop}\label{prop:marked}
If $\Lcal$ is $1$-deletion-closed and $b\ge1$, then
\begin{equation}\label{eq:marked}
 b|\Lcal_{a,b}|
 \le
 (a+b)|\Lcal_{a,b-1}|.
\end{equation}
\end{prop}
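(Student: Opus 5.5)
We prove \eqref{eq:marked} by double counting marked words. Let
\[
 \mathcal M=\{(w,i): w\in\Lcal_{a,b},\ 1\le i\le |w|,\ w_i=1\},
\]
the set of words in $\Lcal_{a,b}$ with one of their $b$ ones marked. Then $|\mathcal M|=b|\Lcal_{a,b}|$. Consider the map $\delta:\mathcal M\to\Lcal_{a,b-1}\times\{0,1,\ldots,a+b-1\}$ which sends $(w,i)$ to $(w',i-1)$. Here $w'$ is the word obtained from $w$ by deleting the marked letter $w_i=1$, and $i-1$ records the slot of $w'$ (among its $a+b$ slots, before the first letter, between letters, or after the last) where that $1$ sat. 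The $1$-deletion-closed hypothesis guarantees that $w'\in\Lcal$. Since $w'$ has $a$ zeros and $b-1$ ones, in fact $w'\in\Lcal_{a,b-1}$.

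The key step is that $\delta$ is injective. From the pair $(w',s)$ one recovers $w$ by inserting a $1$ into slot $s$ of $w'$, and one recovers the mark as position $s+1$. Hence
\[
 b|\Lcal_{a,b}|=|\mathcal M|\le (a+b)\,|\Lcal_{a,b-1}|,
\]
because the target set has cardinality $(a+b)|\Lcal_{a,b-1}|$ (the word $w'$ has length $a+b-1$, hence $a+b$ insertion slots). This is exactly \eqref{eq:marked}.

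The only point needing care is that we must mark positions, not distinct deletion results. Deleting different ones inside the same run of $1$'s yields the same $w'$, but they correspond to different slots $s$, so no collapse occurs. There is no real obstacle in the inequality itself. For later use we note the equality case: equality holds exactly when $\delta$ is surjective, that is, when for every $w'\in\Lcal_{a,b-1}$ inserting a $1$ in any of its $a+b$ slots gives a word of $\Lcal$.
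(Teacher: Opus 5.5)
Your proof is correct and is essentially the paper's argument. You mark one of the $b$ ones and delete it, then record the resulting word of $\Lcal_{a,b-1}$ together with one of its $a+b$ insertion slots; this recovers the marked source word injectively. Your observation that equality means every insertion slot over every word of $\Lcal_{a,b-1}$ is hit is also exactly how the paper starts its proof of the equality case.
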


\begin{proof}
Mark one of the $b$ ones of a word in $\Lcal_{a,b}$ and delete it.
This gives $b|\Lcal_{a,b}|$ marked deletion objects.  The image
consists of a word $z\in\Lcal_{a,b-1}$ together with one of the
$a+b$ insertion slots for a marked $1$.  The map is injective, since
the target word and the marked slot recover the source word and the
marked occurrence.  This proves \eqref{eq:marked}.
\end{proof}

\begin{thm}\label{thm:deletion-equality}
Let $\Lcal$ be a finite $1$-deletion-closed language and $b\ge1$.
Equality in \eqref{eq:marked} holds if and only if either
\[
 \Lcal_{a,b}=\Lcal_{a,b-1}=\varnothing,
\]
or
\[
 \Lcal_{a,b}=\W_{a,b},
 \qquad
 \Lcal_{a,b-1}=\W_{a,b-1}.
\]
\end{thm}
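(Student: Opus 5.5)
Sufficiency is a direct count.  If both sets are empty, both sides of \eqref{eq:marked} vanish.  If $\Lcal_{a,b}=\W_{a,b}$ and $\Lcal_{a,b-1}=\W_{a,b-1}$, then
\[
b\binom{a+b}{b}=(a+b)\binom{a+b-1}{b-1},
\]
so equality holds.

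For necessity, assume equality.  Then the injective marked-deletion map in the proof of Proposition~\ref{prop:marked} is a bijection.  That is, for every $z\in\Lcal_{a,b-1}$ and every one of the $a+b$ insertion slots of $z$, the word obtained by inserting a $1$ into that slot lies in $\Lcal_{a,b}$.  We make two separate closure observations.

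\emph{Closure under insertion.}  If $z\in\Lcal_{a,b-1}$, then every word obtained by inserting a $1$ anywhere into $z$ lies in $\Lcal_{a,b}$.

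\emph{Closure under deletion.}  Since $\Lcal$ is $1$-deletion-closed, every word obtained from a member of $\Lcal_{a,b}$ by deleting a $1$ lies in $\Lcal_{a,b-1}$.

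Combining these, $\Lcal_{a,b-1}$ is closed under any move ``delete one $1$, then reinsert a $1$ in an arbitrary slot.''  Call such a move a \emph{relocation of a $1$}.  The case $b-1=0$ is handled directly: $\W_{a,0}=\{0^a\}$, so $\Lcal_{a,0}$ is either empty or equal to $\W_{a,0}$.  In the first case $\Lcal_{a,1}$ is empty by deletion-closure, since any word of $\Lcal_{a,1}$ would delete to $0^a$.  In the second case the insertion closure gives $\Lcal_{a,1}=\W_{a,1}$.

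Now take $b\ge2$.  We need that relocations act transitively on $\W_{a,b-1}$.  A word in $\W_{a,b-1}$ is determined by the positions of its $b-1$ ones among $a+b-1$ positions.  Any such word can be transformed into $1^{b-1}0^a$ by repeatedly taking the leftmost $1$ that is not in its target position, deleting it, and inserting a $1$ at the front.  Each step strictly increases the length of the initial block of ones, so the process terminates.  Since relocations are reversible, all of $\W_{a,b-1}$ lies in one orbit.

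Hence, if $\Lcal_{a,b-1}$ is nonempty, it equals $\W_{a,b-1}$, and the insertion closure then gives $\Lcal_{a,b}=\W_{a,b}$.  If $\Lcal_{a,b-1}$ is empty, then $\Lcal_{a,b}$ is empty by deletion-closure, because $b\ge1$ means every word of $\Lcal_{a,b}$ has a $1$ to delete.

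The main point requiring care is the transitivity of relocations.  It needs no hypothesis beyond $b-1\ge1$, and the case $b=1$ was handled separately above.
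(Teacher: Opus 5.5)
Your proof is correct and is essentially the paper's argument: surjectivity of the marked-deletion map together with $1$-deletion closure makes a level invariant under moving a single $1$, and connectivity of that move graph then forces the level to be full. The differences are cosmetic. You run the connectivity argument on $\Lcal_{a,b-1}$ and lift to $\Lcal_{a,b}$ by insertion, whereas the paper runs it on $\Lcal_{a,b}$ with adjacent swaps $10\leftrightarrow01$ and descends to $\Lcal_{a,b-1}$ by deletion, which also avoids your separate $b=1$ case. One wording slip: your two closure facts compose as ``insert a $1$, then delete a $1$'', not literally ``delete, then reinsert'', although the net relocation moves coincide when $b\ge2$.
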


\begin{proof}
If equality holds in Proposition~\ref{prop:marked}, every insertion
slot above every word of $\Lcal_{a,b-1}$ must occur in the image.

If $\Lcal_{a,b-1}=\varnothing$, then $\Lcal_{a,b}=\varnothing$ by
$1$-deletion closure.  Suppose that $\Lcal_{a,b-1}\ne\varnothing$.
Then $\Lcal_{a,b}$ is nonempty.  We claim it is invariant under the
adjacent exchange $10\leftrightarrow01$.  Indeed, if
$w=x10y\in\Lcal_{a,b}$,
delete the displayed $1$.  The word $x0y$ belongs to
$\Lcal_{a,b-1}$; equality in the marked-deletion count says that
reinserting a marked $1$ in the slot after the displayed $0$ is
admissible, hence
$x01y\in\Lcal_{a,b}$. The reverse exchange is obtained in the same way.  The graph on
$\W_{a,b}$ generated by adjacent exchanges of unlike letters is
connected, so
$\Lcal_{a,b}=\W_{a,b}$.
Every word of $\W_{a,b-1}$ is obtained by deleting a $1$ from some
word of $\W_{a,b}$, hence $1$-deletion closure gives
$\Lcal_{a,b-1}=\W_{a,b-1}$.
The converse follows directly.
\end{proof}

\section{Describing the subsequence language by run lengths}\label{sec:gapcoords}

We next describe the words in \(\Lcal(v)\) with prescribed numbers of zeros and ones by using the lengths of their runs of ones.  The construction has a direct meaning: choose the
zeros of \(v\) that are retained in the subsequence; every skipped zero
merges two neighboring intervals of ones, so the corresponding
available run lengths are added.

Write
\begin{equation}\label{eq:v-runs}
 v
 =
1^{a_0}0\,1^{a_1}0\cdots0\,1^{a_m},
\end{equation}
so $v$ has exactly $m$ zeros. For $0\le j\le m$ and
$\mathbf x=(x_0,\ldots,x_j)\in\N^{j+1}$, set
\[
 |\mathbf x|=x_0+\cdots+x_j,
 \qquad
 w(\mathbf x)
 =
1^{x_0}0\,1^{x_1}0\cdots0\,1^{x_j}.
\]

\subsection{The subsequence part}

Choose
\[
 I=\{r_1<\cdots<r_j\}\subseteq\{1,\ldots,m\}.
\]
The chosen zeros divide the one-runs in \eqref{eq:v-runs} into
$j+1$ intervals.  The numbers of ones available in these intervals are
\begin{align*}
 c_0(I)
 &=
 \sum_{\nu=0}^{r_1-1}a_\nu,
 \nonumber\\
 c_s(I)
 &=
 \sum_{\nu=r_s}^{r_{s+1}-1}a_\nu
 \qquad(1\le s<j),\\
 c_j(I)
 &=
 \sum_{\nu=r_j}^{m}a_\nu.
 \nonumber
\end{align*}
For $j=0$ there is a single bound
\[
 c_0(\varnothing)=a_0+\cdots+a_m.
\]

\begin{lem}\label{lem:coarse}
Let $\mathbf x=(x_0,\ldots,x_j)\in\N^{j+1}$. Then
$$w(\mathbf x)\preceq v$$
if and only if there exists $I\subseteq\{1,\ldots,m\}$ with
$|I|=j$ such that
$$x_s\le c_s(I)\ \text{for\ every\ }
s.$$
\end{lem}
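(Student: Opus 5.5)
The plan is to prove both directions directly from the definition of subsequence, by matching the $j$ zeros of $w(\mathbf x)$ to zeros of $v$.

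For the forward direction, suppose $w(\mathbf x)\preceq v$ and fix an embedding, that is, an increasing map from the letter positions of $w(\mathbf x)$ to those of $v$ that preserves letters. The $j$ zeros of $w(\mathbf x)$ are sent to $j$ distinct zeros of $v$. In the decomposition \eqref{eq:v-runs}, call them the zeros numbered $r_1<\cdots<r_j$; here the $r$-th zero of $v$ is the one lying between $1^{a_{r-1}}$ and $1^{a_r}$. Put $I=\{r_1,\ldots,r_j\}$. Because the embedding is increasing, the block $1^{x_s}$ lying between the $s$-th and $(s+1)$-st zeros of $w(\mathbf x)$ is sent into the ones of $v$ strictly between the $r_s$-th and $r_{s+1}$-th zeros of $v$. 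By convention the zero numbered $0$ is the start of $v$ and the zero numbered $j+1$ is its end. The ones of $v$ in that stretch are exactly those of the runs $a_{r_s},\ldots,a_{r_{s+1}-1}$. Their total is $c_s(I)$, and distinct ones of $w$ go to distinct ones of $v$, so $x_s\le c_s(I)$ for every $s$. When $j=0$ the claim reduces to $x_0\le\oo(v)=c_0(\varnothing)$.

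For the converse, suppose $I=\{r_1<\cdots<r_j\}$ satisfies $x_s\le c_s(I)$ for every $s$. We build the embedding by sending the $s$-th zero of $w(\mathbf x)$ to the $r_s$-th zero of $v$. We then send the ones of the block $1^{x_s}$ to the first $x_s$ ones of $v$ in the interval between the $r_s$-th and $r_{s+1}$-th zeros; this interval contains $c_s(I)\ge x_s$ ones. The intervals are disjoint and appear in left-to-right order, and within each interval we choose positions in increasing order. The resulting map is therefore increasing and preserves letters, which shows $w(\mathbf x)\preceq v$.

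There is no real obstacle here. The only point needing care is the bookkeeping at the two ends, $s=0$ and $s=j$, where the boundary intervals run from the start of $v$ to the $r_1$-th zero and from the $r_j$-th zero to the end. One also has to check that the indexing in the definition of $c_s(I)$ matches the statement that the $r$-th zero separates the runs $a_{r-1}$ and $a_r$. Once this convention is fixed, both directions are immediate.
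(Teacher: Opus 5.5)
Your proof is correct and follows the paper's own argument. In both directions the $j$ zeros of $w(\mathbf x)$ are matched to the selected zeros $I$ of $v$, and the ones available between consecutive selected zeros are counted. One small slip: the right-hand sentinel should be $r_{j+1}=m+1$, so that $c_j(I)=\sum_{\nu=r_j}^{m}a_\nu$, rather than ``the zero numbered $j+1$''; the left sentinel $r_0=0$ is fine.
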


\begin{proof}
Given an embedding of $w(\mathbf x)$ into $v$, let
$r_1<\cdots<r_j$ be the selected zeros used for the $j$ zeros of
$w(\mathbf x)$.  Between two consecutive selected zeros, every
unselected zero is skipped, so the corresponding block in the subsequence can use
ones from all one-runs in that interval.  This gives
$x_s\le c_s(I)$.

Conversely, if these inequalities hold, choose the zeros indexed by
$I$ and, in each interval between consecutive chosen zeros, select
$x_s$ of the $c_s(I)$ available ones.  This constructs a subsequence
embedding.
\end{proof}

\begin{exa}\label{exa:gap-coordinates}
Let
\[
 v=11\,0\,1\,0\,111,
\]
so \((a_0,a_1,a_2)=(2,1,3)\).  Consider subsequences with one zero.
If the first zero of \(v\) is retained, then the available numbers of
ones before and after it are
\[
 c_0=2,\qquad c_1=1+3=4.
\]
Hence every word \(1^{x_0}0\,1^{x_1}\) with
\(x_0\le2\) and \(x_1\le4\) occurs as a subsequence.  If instead the
second zero is retained, the first zero is skipped, so the first two
one-runs merge.  The bounds become
\[
 c_0=2+1=3,\qquad c_1=3.
\]
Thus the set of one-zero subsequences is the union of these two
explicit families.  The general construction below records exactly
this operation for an arbitrary choice of retained zeros.
\end{exa}

Define
\[
 \Dcal_{j,q}(\mathbf a)
 =
\bigcup_{\substack{I\subseteq\{1,\ldots,m\}\\|I|=j}}
 \left\{
 \mathbf x\in\N^{j+1}:
 |\mathbf x|=q,\ 
 x_s\le c_s(I)\ \text{for\ every\ }
s
 \right\}.
\]
Lemma~\ref{lem:coarse} says that
\[
 \mathbf x\longmapsto w(\mathbf x)
\]
is a bijection from $\Dcal_{j,q}(\mathbf a)$ onto
$D(v)\cap\W_{j,q}$.

\subsection{The lexicographic boundary condition}

It remains to describe the additional words in $\Hcal(v)$.  These are
boundary words whose lexicographic order relative to $v$ is part of the
definition, and the next lemma translates that condition into the run
coordinates.

\begin{lem}\label{lem:lex-condition}
Let $\mathbf x\in\N^{j+1}$ and suppose
\[
 w(\mathbf x)\not\preceq v.
\]
Then
\[
 w(\mathbf x)<_{\rm lex}v
\]
if and only if there exists $s\in\{0,\ldots,j-1\}$ such that
\begin{equation}\label{eq:lex-condition}
 x_0=a_0,\ldots,x_{s-1}=a_{s-1},
 \qquad
 x_s<a_s.
\end{equation}
\end{lem}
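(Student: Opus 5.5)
We compare $w=w(\mathbf x)=1^{x_0}0\,1^{x_1}0\cdots0\,1^{x_j}$ with $v=1^{a_0}0\,1^{a_1}0\cdots0\,1^{a_m}$ letter by letter, reading both as sequences of blocks ``$1^{x_s}0$'' and ``$1^{a_s}0$''. The lexicographic order uses $0<1$, and a proper prefix is smaller than the word it prefixes. Let $s$ be the least index at which the two block sequences differ. There are three possibilities: $x_s\ne a_s$ for some $s$ where both blocks exist, or one word runs out of zeros first.

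For the comparison at a differing index $s\le\min(j,m)$, suppose $x_0=a_0,\ldots,x_{s-1}=a_{s-1}$ and $x_s\ne a_s$. After the common prefix $1^{a_0}0\cdots1^{a_{s-1}}0$, the words continue with $1^{x_s}$ followed by $0$ or the end of the word, and $1^{a_s}$ followed by $0$ or the end of the word. If $x_s<a_s$, then at position $x_s+1$ of this block $w$ has a $0$ or has ended, while $v$ has a $1$. In either case $w<_{\rm lex}v$. Symmetrically, if $x_s>a_s$, then $w>_{\rm lex}v$.

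It remains to handle the boundary cases using the hypothesis $w\not\preceq v$. Suppose no index $s\le\min(j,m)$ has $x_s\ne a_s$.

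If $j\le m$, then $x_s=a_s$ for all $s<j$ and $x_j=a_j$. This makes $w$ a prefix of $v$, hence a subsequence of $v$, which contradicts $w\not\preceq v$. So this case cannot occur.

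If $j>m$, then $v$ is a proper prefix of $w$, so $v<_{\rm lex}w$. In this case no $s$ with \eqref{eq:lex-condition} exists, because all of $x_0,\ldots,x_m$ agree with $a_0,\ldots,a_m$. Both sides of the equivalence are therefore false.

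In the differing-index case, the first differing index $s$ satisfies $s\le\min(j,m)$. We must still show that $s\le j-1$ when $x_s<a_s$, so that it lies in the range allowed by the lemma. Suppose instead $s=j$ and $x_j<a_j$. Then $w=1^{a_0}0\cdots0\,1^{x_j}$ is a subsequence of $1^{a_0}0\cdots0\,1^{a_j}$, hence of $v$, which is again a contradiction.

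Thus $w<_{\rm lex}v$ holds exactly when the first differing index $s$ has $x_s<a_s$, and then automatically $s\le j-1$. This is precisely condition \eqref{eq:lex-condition}. Conversely, if \eqref{eq:lex-condition} holds for some $s\le j-1$, then $s$ is the first differing index. Since $s\le m$ is needed for $a_s$ to be defined, the second paragraph gives $w<_{\rm lex}v$.

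The only delicate point is the boundary bookkeeping. One has to check that the case $s=j$ and the prefix cases are excluded or consistent, and this is exactly where the hypothesis $w\not\preceq v$ is used.
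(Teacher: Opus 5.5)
Your proof is correct and follows the same route as the paper: compare the words from the left at the first differing run, and use $w(\mathbf x)\not\preceq v$ to exclude the prefix cases (all runs agreeing, or $s=j$ with $x_j<a_j$). Your separate treatment of $j>m$ is harmless but not needed, since the section already fixes $0\le j\le m$.
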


\begin{proof}
Compare the two words from the left.  If the first discrepancy occurs
inside the $s$th one-run before the $s$th target zero, then
$x_s<a_s$ means that $w(\mathbf x)$ reaches a $0$ while $v$ still has
a $1$, hence $w(\mathbf x)<_{\rm lex}v$; the opposite inequality gives
the reverse lexicographic order.

If the first $j$ one-runs agree exactly, then a shorter final run makes
$w(\mathbf x)$ a prefix of $v$ and hence a subsequence, contrary to
the hypothesis.  An equal final run is again a subsequence, while a
longer final run makes $w(\mathbf x)>_{\rm lex}v$.  Thus the only
possible way to have $w(\mathbf x)<_{\rm lex}v$ outside $D(v)$ is
\eqref{eq:lex-condition}.
\end{proof}

Let $\Lambda_j(\mathbf a)$ denote the set of all $\mathbf x$ satisfying
\eqref{eq:lex-condition} for some $s<j$. Let $e_j=(0,\ldots,0,1)\in\N^{j+1}$ denote the last standard basis vector. Define
\[
 \Bcal_{j,q}(\mathbf a)
 =
 \left\{
 \mathbf x\in\N^{j+1}\ \middle|\
 \begin{aligned}
 &|\mathbf x|=q,\quad x_j\ge1,\\
 &\mathbf x-e_j\in\Dcal_{j,q-1}(\mathbf a),\quad
   \mathbf x\notin\Dcal_{j,q}(\mathbf a),\\
 &\mathbf x\in\Lambda_j(\mathbf a)
 \end{aligned}
 \right\}.
\]
\begin{prop}\label{prop:boundary-coarse}
The map $\mathbf x\mapsto w(\mathbf x)$ is a bijection from
$\Bcal_{j,q}(\mathbf a)$ onto
\[
 \Hcal(v)\cap\W_{j,q}.
\]
\end{prop}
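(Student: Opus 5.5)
The plan is to unwind the definition of $\Hcal(v)$ and match each defining condition with one of the conditions in $\Bcal_{j,q}(\mathbf a)$. Every binary word with $j$ zeros and $q$ ones can be written uniquely as $w(\mathbf x)$ with $\mathbf x\in\N^{j+1}$ and $|\mathbf x|=q$. So $\mathbf x\mapsto w(\mathbf x)$ is a bijection from $\{\mathbf x\in\N^{j+1}:|\mathbf x|=q\}$ onto $\W_{j,q}$. It therefore suffices to show, for such $\mathbf x$, that $w(\mathbf x)\in\Hcal(v)$ if and only if $\mathbf x\in\Bcal_{j,q}(\mathbf a)$. Injectivity of the restricted map is then automatic.

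By definition, $y=w(\mathbf x)\in\Hcal(v)$ means three things:
\begin{enumerate}[label=\textup{(\alph*)},leftmargin=2.8em]
\item $y$ ends in $1$, and deleting that final $1$ gives a word in $D(v)$;
\item $y\notin D(v)$;
\item $y<_{\rm lex}v$.
\end{enumerate}
I will translate each condition in turn.

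\emph{Condition (a).} The word $w(\mathbf x)$ ends in $1$ exactly when its last run is nonempty, i.e.\ $x_j\ge1$. In that case removing the final letter gives $w(\mathbf x-e_j)$, a word with $j$ zeros and $q-1$ ones. By Lemma~\ref{lem:coarse} and the bijection $\Dcal_{j,q-1}(\mathbf a)\to D(v)\cap\W_{j,q-1}$, this word lies in $D(v)$ iff $\mathbf x-e_j\in\Dcal_{j,q-1}(\mathbf a)$. One point needs care: a word ending in $0$ cannot be of the form $u1$. So the condition $x_j\ge1$ is genuinely required, and not merely implied by the others.

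\emph{Condition (b).} By the same bijection, $y\notin D(v)$ iff $\mathbf x\notin\Dcal_{j,q}(\mathbf a)$.

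\emph{Condition (c).} Given (b), Lemma~\ref{lem:lex-condition} applies directly, so $y<_{\rm lex}v$ iff $\mathbf x\in\Lambda_j(\mathbf a)$.

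Conjoining the three translations gives exactly the defining conditions of $\Bcal_{j,q}(\mathbf a)$. Hence the image of $\Bcal_{j,q}(\mathbf a)$ is precisely $\Hcal(v)\cap\W_{j,q}$.

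The main obstacle is only bookkeeping in step (c). Lemma~\ref{lem:lex-condition} carries the hypothesis $w(\mathbf x)\not\preceq v$, so the lexicographic condition may be replaced by $\Lambda_j(\mathbf a)$ only after (b) has been imposed. I will therefore order the argument as: first restrict to $\mathbf x\notin\Dcal_{j,q}(\mathbf a)$, and only then apply the lemma. This way the equivalence is never used outside its range of validity.
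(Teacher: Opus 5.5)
Your proof is correct and is essentially the paper's argument: you unwind $\Hcal(v)$ into the three conditions defining $\partial_1D(v)$ plus the lexicographic one, translate them via Lemma~\ref{lem:coarse} and Lemma~\ref{lem:lex-condition}, and use the condition $\mathbf x\notin\Dcal_{j,q}(\mathbf a)$ to supply the lemma's hypothesis $w(\mathbf x)\not\preceq v$. One harmless slip: $x_j\ge1$ is in fact implied by $\mathbf x-e_j\in\Dcal_{j,q-1}(\mathbf a)\subseteq\N^{j+1}$, so it is not independent of the other conditions, though listing it explicitly does no damage.
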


\begin{proof}
A word $w(\mathbf x)$ belongs to $\partial_1D(v)$ if and only if it
ends in $1$, deleting that final $1$ gives a word of $D(v)$, and the
full word is not in $D(v)$.  In these coordinates the conditions are
exactly
\[
 x_j\ge1,\qquad
 \mathbf x-e_j\in\Dcal_{j,q-1}(\mathbf a),
 \qquad
 \mathbf x\notin\Dcal_{j,q}(\mathbf a).
\]
The additional condition defining $\Hcal(v)$ is
$w(\mathbf x)<_{\rm lex}v$, which is exactly the condition
$\mathbf x\in\Lambda_j(\mathbf a)$ by
Lemma~\ref{lem:lex-condition}.
\end{proof}

Set
\[
 \Ccal_{j,q}(\mathbf a)
 =
 \Dcal_{j,q}(\mathbf a)
 \mathbin{\dot\cup}
 \Bcal_{j,q}(\mathbf a).
\]

\begin{thm}\label{thm:gap-coordinates}
For every $0\le j\le m$ and $q\ge0$, the map
\[
 \mathbf x\longmapsto
 1^{x_0}0\,1^{x_1}0\cdots0\,1^{x_j}
\]
is a bijection
\[
 \Ccal_{j,q}(\mathbf a)
 \longrightarrow
 \Lcal(v)\cap\W_{j,q}.
\]
Consequently, if $W=10v$, then
\begin{equation}\label{eq:coarse-h}
 h_{q,j}(W)
 =
 |\Ccal_{j,q}(\mathbf a)|.
\end{equation}
\end{thm}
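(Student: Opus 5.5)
The plan is to assemble Theorem~\ref{thm:gap-coordinates} from the two bijections already in hand, one for each part of the disjoint union
\[
\Lcal(v)=D(v)\mathbin{\dot\cup}\Hcal(v).
\]

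First, intersect this decomposition with $\W_{j,q}$. This gives
\[
 \Lcal(v)\cap\W_{j,q}=\bigl(D(v)\cap\W_{j,q}\bigr)\mathbin{\dot\cup}\bigl(\Hcal(v)\cap\W_{j,q}\bigr).
\]
By Lemma~\ref{lem:coarse}, the map $\mathbf x\mapsto w(\mathbf x)$ is a bijection from $\Dcal_{j,q}(\mathbf a)$ onto the first piece. By Proposition~\ref{prop:boundary-coarse}, the same map is a bijection from $\Bcal_{j,q}(\mathbf a)$ onto the second piece.

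To glue these into one bijection, I need two facts.

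\emph{The domains are disjoint.} The defining condition $\mathbf x\notin\Dcal_{j,q}(\mathbf a)$ in $\Bcal_{j,q}(\mathbf a)$ gives $\Dcal_{j,q}\cap\Bcal_{j,q}=\varnothing$, so $\Ccal_{j,q}$ really is a disjoint union.

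\emph{The map is injective on all of $\N^{j+1}$.} The word $w(\mathbf x)=1^{x_0}0\cdots01^{x_j}$ determines $j$ as its number of zeros, and determines each $x_s$ as the length of the corresponding run of ones. Hence the restriction to $\Ccal_{j,q}$ is injective. Its image is the union of the two images, which is $\Lcal(v)\cap\W_{j,q}$.

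Two checks remain for this part.

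\emph{Every word of the target has the form $w(\mathbf x)$.} A binary word with exactly $j$ zeros and $q$ ones is uniquely $w(\mathbf x)$ with $\mathbf x\in\N^{j+1}$ and $|\mathbf x|=q$.

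\emph{The range $0\le j\le m$ is the relevant one.} Every word of $\Lcal(v)$ has at most $m=\zz(v)$ zeros. For words of $D(v)$ this is clear; for words of $\Hcal(v)$, deleting the final $1$ gives a word of $D(v)$ with the same number of zeros. So restricting to $j\le m$ loses nothing.

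Finally, for \eqref{eq:coarse-h}, take cardinalities. This gives $|\Ccal_{j,q}(\mathbf a)|=|\Lcal(v)\cap\W_{j,q}|=j_{j,q}(v)$. By \eqref{eq:bridge-coeff} in Theorem~\ref{thm:bridge}, this equals $h_{q,j}(W)$ for $W=10v$.

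I do not expect a real obstacle. The only point needing care is that the index conventions match: $j$ counts zeros and $q$ counts ones, consistently with the ordering $h_{q,j}=j_{j,q}$ in \eqref{eq:bridge-coeff}.
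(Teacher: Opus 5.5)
Your proposal is correct and follows the paper's own proof: the bijection is the disjoint union of the bijections from Lemma~\ref{lem:coarse} and Proposition~\ref{prop:boundary-coarse}, which are disjoint because $\Bcal_{j,q}(\mathbf a)$ excludes $\Dcal_{j,q}(\mathbf a)$, and the coefficient identity is read off from \eqref{eq:bridge-coeff}. Your extra checks on global injectivity of $\mathbf x\mapsto w(\mathbf x)$ and on the range $0\le j\le m$ only make explicit what the paper leaves implicit.
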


\begin{proof}
The first assertion is the disjoint union of
Lemma~\ref{lem:coarse} and Proposition~\ref{prop:boundary-coarse}.
The coefficient identity follows from Theorem~\ref{thm:bridge}.
\end{proof}

\begin{rem}
The coordinates depend on the chosen cut.  Choose an occurrence of \(10\)
in the cyclic word \(W\) and write \(W=10v\).
A different cut changes the run coordinates \(\mathbf a\) and the
lexicographic boundary condition, but not the cardinalities in
\eqref{eq:coarse-h}, because \(H_W\) is invariant under cyclic
rotation.
\end{rem}

\section{The level with $Z-1$ zeros}
\label{sec:top}

At the highest relevant zero level there is no choice of retained
zeros: every zero of \(v\) must be used.  The description from the previous section therefore reduces to a single
set of integer vectors with coordinatewise upper bounds.  This specialization is the part needed for the equality theorem and for the deficit estimate in the range
$R\ge Z$.

Assume throughout this section that
\[
 R\ge Z\ge2,
 \qquad
 m=Z-1.
\]
Choose a cyclic rotation $W=10v$ and write
\[
 v=1^{a_0}0\,1^{a_1}0\cdots0\,1^{a_m}.
\]
Let $g_0,\ldots,g_m$ be the gap lengths of $W$, ordered starting from the zero in the displayed occurrence of $10$, and put $\mathbf g=(g_0,\ldots,g_m)$. Then
\begin{equation}\label{eq:gaps-a}
g_i=a_i\quad(0\le i<m),
 \qquad
 g_m=a_m+1,
\end{equation}
and
\[
g_0+\cdots+g_m=R.
\]

\begin{thm}\label{thm:top-box}
For every $0\le q<R$,
\begin{equation}\label{eq:top-box}
 \Ccal_{m,q}(\mathbf a)
=
\left\{
\mathbf x\in\N^{m+1}:
|\mathbf x|=q,\ 
x_i\le g_i\ \text{for\ every}\ i
\right\}.
\end{equation}
Consequently,
\begin{equation}\label{eq:top-coeff}
 h_{q,m}(W)
 =
 [y^q]
\prod_{i=0}^{m}(1+y+\cdots+y^{g_i}),
\end{equation}
where $[y^q]P(y)$ denotes the coefficient of $y^q$ in $P(y)$.
\end{thm}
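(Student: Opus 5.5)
At level $j=m$ the only admissible index set is $I=\{1,\ldots,m\}$, so every zero of $v$ is retained and $c_s(I)=a_s$ for every $s$. Hence
\[
 \Dcal_{m,q}(\mathbf a)=\{\mathbf x:|\mathbf x|=q,\ x_i\le a_i\ \text{for all } i\},
\]
and by \eqref{eq:gaps-a} this is the box with bounds $g_i$ for $i<m$ and $g_m-1$ in the last coordinate. We must show that $\Bcal_{m,q}(\mathbf a)$ is exactly the set of $\mathbf x$ with $|\mathbf x|=q$, $x_i\le g_i$ for all $i$, and $x_m=g_m=a_m+1$. The disjoint union of these two sets is then the box in \eqref{eq:top-box}, and \eqref{eq:top-coeff} follows from \eqref{eq:coarse-h} by the standard generating function for a box.

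First, take $\mathbf x\in\Bcal_{m,q}(\mathbf a)$. Since $\mathbf x-e_m\in\Dcal_{m,q-1}$, we have $x_i\le a_i$ for $i<m$ and $x_m-1\le a_m$. Since $\mathbf x\notin\Dcal_{m,q}$, some bound fails, and the only one that can fail is the last. So $x_m=a_m+1=g_m$, and $x_i\le a_i=g_i$ for $i<m$.

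Conversely, take $\mathbf x$ with $x_i\le a_i$ for $i<m$, $x_m=a_m+1$, and $|\mathbf x|=q$. Then $x_m\ge1$, $\mathbf x-e_m\in\Dcal_{m,q-1}$, and $\mathbf x\notin\Dcal_{m,q}$. It remains to verify the lexicographic condition $\mathbf x\in\Lambda_m(\mathbf a)$, that is, some $s<m$ with $x_0=a_0,\ldots,x_{s-1}=a_{s-1}$ and $x_s<a_s$. This is where $q<R$ is used. If $x_i=a_i$ for every $i<m$, then
\[
 |\mathbf x|=a_0+\cdots+a_m+1=R-1+1=R,
\]
because $v$ has $R-1$ ones. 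This contradicts $q<R$. So some $i<m$ has $x_i<a_i$. Taking the smallest such index as $s$ gives \eqref{eq:lex-condition}, since all earlier coordinates satisfy $x_i\le a_i$ and are not strictly smaller, hence equal.

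The main point needing care is this lexicographic step, namely that the excluded vector $(a_0,\ldots,a_{m-1},a_m+1)$ has weight exactly $R$. That is why the theorem is stated for $q<R$. The remaining bookkeeping is the identification $c_s(I)=a_s$ when all zeros are retained, which is immediate from the definitions since each interval between consecutive retained zeros then contains a single one-run.
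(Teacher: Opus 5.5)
Your proof is correct and follows the paper's own route. You identify the level-$m$ subsequence part as the box $x_i\le a_i$, show the boundary part is the slab $x_m=a_m+1$, $x_i\le a_i$ $(i<m)$, and use $q<R$ to rule out the single lexicographically excluded vector $(a_0,\ldots,a_{m-1},a_m+1)$ of weight $R$; your explicit choice of the smallest $s$ with $x_s<a_s$ simply spells out the step the paper states in one line.
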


\begin{proof}
At level $j=m$ every zero of $v$ must be used. Hence the
subsequence part consists precisely of the vectors satisfying
\[
 x_i\le a_i\qquad(0\le i\le m).
\]
A boundary vector has $x_m\ge1$, and after deleting the final $1$ it
must satisfy these inequalities.  Since the full word is not a
subsequence, this forces
\[
 x_m=a_m+1,
 \qquad
 x_i\le a_i\quad(i<m).
\]
The lexicographic boundary condition excludes only the vector
$(a_0,\ldots,a_{m-1},a_m+1)$,
for which no earlier run is shortened.  Its total degree is
\[
a_0+\cdots+a_m+1=R.
\]
Thus for $q<R$ no exclusion remains, and
\eqref{eq:gaps-a} gives exactly \eqref{eq:top-box}.  Taking the
ordinary generating function proves \eqref{eq:top-coeff}.
\end{proof}

We record the following consequence of Theorem~\ref{thm:top-box}.  For $q=m$, the unrestricted level consists of all vectors
$\mathbf x\in\N^{m+1}$ with $|\mathbf x|=m$.  Hence the level in
\eqref{eq:top-box} is complete if and only if it contains $m e_i$ for
every $i$, which is equivalent to
\[
 g_i\ge m\qquad(0\le i\le m).
\]

\section{Equality, enumeration, and deficit bounds}\label{sec:equality}

For the $k$-bit cyclic word $W$ of $t$, write
\[
 R=\oo(W),\qquad Z=\zz(W).
\]
Since $1\le t<2^k-1$, we always have $Z\ge1$.

\subsection{Equality and enumeration}

We first dispose of the range in which the cyclic word has more zeros
than ones.

\begin{lem}\label{lem:low-no-equality}
If $R<Z$, then
\[
 |S_{t,k}|<2^{k-1}.
\]
\end{lem}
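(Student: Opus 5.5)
The plan is to read the claim off the factored deficit identity \eqref{eq:deficit-factor-minus} of Corollary~\ref{cor:divisibility}. When $R<Z$ it reads
\[
 2^{k-1}-|S_{t,k}|+1
 =
 2^{Z-R-1}\sum_{\ell=1}^{R}4^{R-\ell}\Delta_\ell(v).
\]
It therefore suffices to show that the right-hand side is at least $2$. Since $t\ge1$, we have $R\ge1$, so the sum is nonempty. Every $\Delta_\ell(v)$ is nonnegative by the deletion inequality of \cite{LiuLuoXie2026}. The factor $2^{Z-R-1}$ is at least $1$. Hence it is enough to exhibit a single index $\ell$ with $\Delta_\ell(v)\ge2$, and the natural choice is the top index $\ell=R$, which carries weight $4^{0}=1$.

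For $\ell=R$, write $\Delta_R(v)=2j_{R,R-1}(v)-j_{R,R}(v)$. The proof of Proposition~\ref{prop:deficit} already shows, via Lemma~5.4 of \cite{LiuLuoXie2026}, that $\deg_Y J_v\le R-1$ when $R<Z$. Consequently $j_{R,R}(v)=0$ and $\Delta_R(v)=2j_{R,R-1}(v)$.

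It remains to show $j_{R,R-1}(v)\ge1$, that is, to find one word of $\Lcal(v)$ with exactly $R$ zeros and $R-1$ ones. Here $v$ itself has $R-1$ ones and $Z-1\ge R$ zeros. Deleting $Z-1-R$ of its zeros produces a subsequence of $v$ with exactly $R$ zeros and $R-1$ ones. This word lies in $D(v)\subseteq\Lcal(v)$, so $j_{R,R-1}(v)\ge1$ and hence $\Delta_R(v)\ge2$.

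Combining these facts, the right-hand side of \eqref{eq:deficit-factor-minus} is at least $2^{Z-R-1}\cdot 2\ge2$. Therefore $2^{k-1}-|S_{t,k}|\ge1$, which is the claim.

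The only step needing care is the vanishing $j_{R,R}(v)=0$. It is the degree bound already invoked in Proposition~\ref{prop:deficit}, so it should cause no difficulty. Everything else is the elementary construction of one subsequence, together with nonnegativity of the remaining terms.
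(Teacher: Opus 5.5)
Your proof is correct and is essentially the paper's argument. Both show $\Delta_R\ge 2$ from $j_{R,R}(v)=0$ (Lemma~5.4 of \cite{LiuLuoXie2026}) together with an explicit subsequence having $R$ zeros and $R-1$ ones, and then use nonnegativity of the other $\Delta_\ell$ to get $2^{k-1}-|S_{t,k}|+1\ge 2^{Z-R}\ge 2$; the only cosmetic difference is that you start from the factored form in Corollary~\ref{cor:divisibility}, whereas the paper starts from \eqref{eq:deficit-general}.
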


\begin{proof}
Choose a cyclic rotation $W=10v$.  Since $v$ has $R-1$ ones,
Lemma~5.4 of \cite{LiuLuoXie2026} gives
$j_{R,R}(v)=0$.  Since $v$ has $Z-1\ge R$ zeros, it has a subsequence
with $R$ zeros and $R-1$ ones, so $j_{R,R-1}(v)\ge1$.  Hence
\[
 \Delta_R(W)=2j_{R,R-1}(v)-j_{R,R}(v)\ge2.
\]
Using Proposition~\ref{prop:deficit} and $k=R+Z$,
\[
 2^{k-1}-|S_{t,k}|+1
 \ge
 2^{k-2R-1}\Delta_R(W)
 \ge
 2^{Z-R}\ge2.
\]
Thus $2^{k-1}-|S_{t,k}|\ge1$.
\end{proof}

\begin{lem}\label{lem:top-nonempty}
Assume $R\ge Z\ge2$ and put $m=Z-1$.  Then the level
$\Lcal(v)\cap\W_{m,m}$ is nonempty.
\end{lem}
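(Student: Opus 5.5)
We want to show that $\Lcal(v)$ contains at least one word with exactly $m$ zeros and $m$ ones. The natural route is Theorem~\ref{thm:top-box}. Since $R\ge Z=m+1>m$, the value $q=m$ lies in the range $0\le q<R$ covered by that theorem. Therefore
\[
 \Ccal_{m,m}(\mathbf a)=\{\mathbf x\in\N^{m+1}:|\mathbf x|=m,\ x_i\le g_i\text{ for every }i\},
\]
and by Theorem~\ref{thm:gap-coordinates} this set is in bijection with $\Lcal(v)\cap\W_{m,m}$. It thus suffices to produce one vector $\mathbf x$ with $|\mathbf x|=m$ and $0\le x_i\le g_i$ for all $i$.

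This is a greedy filling problem inside a box. The box $\prod_i\{0,\ldots,g_i\}$ contains points of every total between $0$ and $\sum_i g_i=R$. Since $0\le m<R$, a point of total $m$ exists. Concretely, set
\[
 x_i=\min\Bigl(g_i,\ m-\sum_{\nu<i}x_\nu\Bigr)
\]
for $i=0,1,\ldots,m$. The running total never exceeds $m$, and it reaches $m$ because $\sum g_i=R\ge m$.

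Alternatively, one can avoid Theorem~\ref{thm:top-box} and argue directly. One checks that $v$ itself has a subsequence with $m$ zeros and $m$ ones: keep all $m$ zeros of $v$ and any $m$ of its $R-1\ge m$ ones. Here $v$ has $R-1$ ones because $W=10v$, and $R-1\ge Z-1=m$. This word lies in $D(v)\subseteq\Lcal(v)$, which already proves nonemptiness.

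The only point needing care is the count of ones in $v$, together with the inequality $R-1\ge m$, which follows from $R\ge Z$. So I expect no real obstacle; the direct subsequence argument is the one I would write.
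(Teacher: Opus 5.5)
Your proposal is correct, and the direct argument you say you would write is exactly the paper's proof: keep all $m$ zeros of $v$ and any $m$ of its $R-1\ge m$ ones, which gives a balanced word in $D(v)\subseteq\Lcal(v)$ lying in $\W_{m,m}$. The first route through Theorem~\ref{thm:top-box} and the box-filling vector is also valid, since $q=m<R$ and $\sum_i g_i=R\ge m$, but it is unnecessary here.
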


\begin{proof}
The word $v$ has $m$ zeros and $R-1\ge m$ ones.  Choosing all zeros
and any $m$ ones gives a balanced subsequence of $v$.
\end{proof}

\begin{thm}\label{thm:equality}
Let $1\le t<2^k-1$.  Let $Z$ be the number of zeros in its $k$-bit
cyclic word, and let
\[
 g_1,\ldots,g_Z
\]
be the cyclic numbers of ones between successive zeros.  Then
\begin{equation}\label{eq:equality}
|S_{t,k}|=2^{k-1}
\quad\Longleftrightarrow\quad
 g_i\ge Z-1\ \text{for\ every\ }
 1\le i\le Z.
\end{equation}
\end{thm}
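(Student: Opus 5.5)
We split into cases according to the sizes of $R$ and $Z$, and in the main case combine Corollary~\ref{cor:divisibility} with the equality analysis of Theorem~\ref{thm:deletion-equality} and the box description of Theorem~\ref{thm:top-box}.

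\emph{The case $R<Z$.} Here equality fails by Lemma~\ref{lem:low-no-equality}. We check that the right-hand side of \eqref{eq:equality} also fails. Since $\sum g_i=R<Z$, some $g_i=0$, while $Z-1\ge R\ge1$ (note $t\ge1$, so $R\ge1$). Hence $g_i\ge Z-1$ is false for that $i$.

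\emph{The case $Z=1$.} Then $R=k-1\ge1\ge Z$, and Corollary~\ref{cor:divisibility} gives deficit zero, since the sum is empty. The condition $g_1\ge0$ holds trivially, so both sides of \eqref{eq:equality} are true.

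\emph{The case $R\ge Z\ge2$, with $m=Z-1$.} By \eqref{eq:deficit-factor-plus} and $\Delta_\ell\ge0$, equality $|S_{t,k}|=2^{k-1}$ holds if and only if $\Delta_\ell(W)=0$ for all $1\le\ell\le m$. Since $\Lcal(v)$ is $1$-deletion-closed (Proposition~\ref{prop:LLX}(i)), the marked-deletion inequality \eqref{eq:marked} with $a=b=\ell$ reads
\[
\ell\,j_{\ell,\ell}\le 2\ell\, j_{\ell,\ell-1},
\]
that is, $\Delta_\ell\ge0$. Hence $\Delta_\ell=0$ is precisely equality in \eqref{eq:marked}. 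By Theorem~\ref{thm:deletion-equality}, this happens if and only if $\Lcal_{\ell,\ell}$ and $\Lcal_{\ell,\ell-1}$ are both empty or both full.

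\emph{Necessity.} Take $\ell=m$. Lemma~\ref{lem:top-nonempty} says $\Lcal_{m,m}\neq\varnothing$, so $\Delta_m=0$ forces $\Lcal_{m,m}=\W_{m,m}$. By Theorem~\ref{thm:top-box} (applicable since $q=m<R$) and the remark following it, this fullness is equivalent to $g_i\ge m$ for all $i$. So equality implies the gap condition.

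\emph{Sufficiency.} Assume $g_i\ge m$ for all $i$; we show every level $\ell\le m$ is full. The main point is that fullness must be checked at each level $\ell<m$, with fewer zeros, and we plan to deduce it from fullness at the top level rather than from the messy sets $\Ccal_{\ell,q}$. Theorem~\ref{thm:top-box} gives fullness of $\Lcal_{m,q}$ for every $q\le m$: a vector with $|\mathbf x|=q\le m$ has each $x_i\le m\le g_i$. Now let $w\in\W_{\ell,\ell}$ with $\ell<m$. Append $m-\ell$ zeros to $w$ to obtain a word in $\W_{m,\ell}$, which lies in $\Lcal(v)$ since $\ell\le m$.

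The step I expect to be hardest is that $\Lcal(v)$ is closed only under deleting $1$'s, not $0$'s, so we cannot simply remove the appended zeros. I would first try to argue directly that $w\in D(v)$. Since $w$ has $\ell$ zeros and $\ell$ ones, it is a subsequence of $(10)^{\ell}$-type patterns. Alternatively, use Lemma~\ref{lem:coarse} with $I$ chosen as any $\ell$ zeros of $v$: each interval bound $c_s(I)$ is at least one full gap $a_s\ge g_s-[s=m]\ge m-1$, and the merged intervals are larger still. This should give $x_s\le\ell\le c_s(I)$ for a suitable choice of $I$; the edge case $a_m=g_m-1$ needs care, e.g.\ by choosing $I$ to avoid a final short interval. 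This shows $\Lcal_{\ell,\ell}=\W_{\ell,\ell}$, hence $\Lcal_{\ell,\ell-1}$ is full by deletion-closure, so $\Delta_\ell=0$ for every $\ell$ and the deficit vanishes.
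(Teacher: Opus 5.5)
Your proposal is correct and follows essentially the paper's proof: the same case split, necessity via $\Delta_m(W)=0$, Lemma~\ref{lem:top-nonempty}, Theorem~\ref{thm:deletion-equality} and the pure vectors $m e_i$ of Theorem~\ref{thm:top-box}, and sufficiency via Lemma~\ref{lem:coarse} at each level $\ell<m$ followed by $1$-deletion closure. You also check that the gap condition fails when $R<Z$, which the paper leaves implicit; the edge case you flag is not an issue, since every $c_s(I)$ contains some full run $a_\nu\ge m-1\ge\ell\ge x_s$, so any choice of $I$ works (the paper takes the first $\ell$ zeros), and the exploratory detours via appended zeros and $(10)^\ell$ patterns can simply be dropped.
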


\begin{proof}
If $R<Z$, equality is impossible by
Lemma~\ref{lem:low-no-equality}.  Suppose $R\ge Z$.  If $Z=1$, then
the sum in Proposition~\ref{prop:deficit} is empty, so
$|S_{t,k}|=2^{k-1}$; the gap condition is automatic because its unique
gap satisfies $g_1=R\ge0=Z-1$.  Hence assume $Z\ge2$ and put
$m=Z-1$.

Suppose first that equality holds.  By
Proposition~\ref{prop:deficit}, every $\Delta_\ell(W)$ vanishes,
in particular $\Delta_m(W)=0$.  By
Lemma~\ref{lem:top-nonempty} and
Theorem~\ref{thm:deletion-equality}, the level $\Lcal(v)\cap\W_{m,m}$ is the whole of $\W_{m,m}$.  By Theorems~\ref{thm:bridge} and
\ref{thm:top-box}, this level is the set
\[
 \left\{
 \mathbf x\in\N^{m+1}:
 |\mathbf x|=m,\ x_i\le g_i
 \right\}.
\]
Since it is full, it contains the pure vector $m e_i$ for every $i$.
Thus
\[
 g_i\ge m=Z-1
\]
for every cyclic gap.

Conversely suppose all cyclic gaps satisfy $g_i\ge m$.
The level $\Lcal(v)\cap\W_{m,m}$ is full by
Theorem~\ref{thm:top-box}.  We claim that
$\Lcal(v)\cap\W_{\ell,\ell}$ is full for every $\ell<m$.  Choose a
cyclic rotation $W=10v$ and write
$v$ as in \eqref{eq:v-runs}.  Then
\[
 a_0,\ldots,a_{m-1}\ge m,
 \qquad
 a_m\ge m-1.
\]
Fix $\ell<m$.  Any word of $\W_{\ell,\ell}$ has gap exponents
$x_0,\ldots,x_\ell$ with each $x_s\le \ell\le m-1$.  Use the first
$\ell$ zeros of $v$.  For this choice of zeros, the first $\ell$ bounds are at least $m$,
and the last is at least $m-1\ge \ell$.
Hence Lemma~\ref{lem:coarse} shows that every word of
$\W_{\ell,\ell}$ belongs to $D(v)$.

Thus
\[
 \Lcal(v)\cap\W_{\ell,\ell}=\W_{\ell,\ell}
 \qquad(1\le \ell\le m).
\]
Since every word
of $\W_{\ell,\ell-1}$ is obtained by deleting a $1$ from some word of
$\W_{\ell,\ell}$, deletion of a $1$ then gives
$\Lcal(v)\cap\W_{\ell,\ell-1}=\W_{\ell,\ell-1}$ as well.  Hence
\[
 \Delta_\ell(W)=0
 \qquad(1\le \ell\le m).
\]
Equation \eqref{eq:deficit-general} now gives
$|S_{t,k}|=2^{k-1}$.
\end{proof}

\begin{rem}
Flori, Randriambololona, Cohen and Mesnager
\cite[Conjecture~3.20]{FloriRandriamCohenMesnager2010} conjectured that
their separated-zero equality family exhausts all parameters attaining the
bound.  Deng and Yuan \cite{DengYuan2012} later stated explicitly that
the converse to their corresponding equality theorem was conjectured.
In terms of the gap lengths, this is precisely the right-hand side of
\eqref{eq:equality}.  Thus Theorem~\ref{thm:equality} gives another proof of
Conjecture~3.20 of \cite{FloriRandriamCohenMesnager2010}.
\end{rem}

\subsubsection{Counting the equality parameters}

Let $E_{k,Z}$ denote the number of equality parameters whose $k$-bit
word has exactly $Z$ zeros.

\begin{cor}\label{cor:enum}
If $1\le Z\le\lfloor\sqrt{k}\rfloor$, then
\begin{equation}\label{eq:E-kZ}
 E_{k,Z}
 =
 \frac{k}{Z}
 \binom{k-Z^2+Z-1}{Z-1}.
\end{equation}
For $Z>\sqrt{k}$ one has $E_{k,Z}=0$.  Hence
\begin{equation}\label{eq:E-k}
 E_k
 =
 \sum_{1\le Z\le\lfloor\sqrt{k}\rfloor}
 \frac{k}{Z}
 \binom{k-Z^2+Z-1}{Z-1}.
\end{equation}
\end{cor}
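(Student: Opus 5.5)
By Theorem~\ref{thm:equality}, $E_{k,Z}$ counts the binary words $t$ of length $k$ with $1\le t<2^k-1$ whose cyclic word has exactly $Z$ zeros and all cyclic gaps $g_i\ge Z-1$. The corollary is therefore a pure counting statement about such words, and I will count them as linear words in $\{0,1\}^k$ with a marking argument.

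\textbf{Step 1: the bound $Z\le\sqrt k$.} If every gap satisfies $g_i\ge Z-1$, then $R=\sum g_i\ge Z(Z-1)$. Hence $k=R+Z\ge Z^2$, so $Z\le\lfloor\sqrt k\rfloor$. This gives $E_{k,Z}=0$ for $Z>\sqrt k$.

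\textbf{Step 2: count pairs (word, marked zero).} Fix $1\le Z\le\lfloor\sqrt k\rfloor$ and count pairs $(t,p)$, where $t$ is an admissible word and $p$ is a position holding a zero of $t$. Each $t$ has exactly $Z$ zeros, so the number of pairs is $Z\,E_{k,Z}$.

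To count the pairs another way, read the word cyclically starting at the marked position $p$. This turns the pair into a starting position $p\in\{0,\dots,k-1\}$ together with a sequence $(g_1,\dots,g_Z)$ of gap lengths, where $g_1$ is the run of ones following the zero at $p$. The sequence must satisfy $g_i\ge Z-1$ and $\sum g_i=k-Z$.

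The map $(t,p)\mapsto(p,\mathbf g)$ is a bijection onto $\{0,\dots,k-1\}\times G$, where $G$ is the set of admissible gap sequences. Its inverse places a zero at $p$ and then lays down $1^{g_1}0\,1^{g_2}0\cdots$ around the cycle. No periodicity issue arises, because positions are absolute rather than up to rotation.

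\textbf{Step 3: count $G$.} Substitute $g_i=(Z-1)+y_i$ with $y_i\ge0$. The condition becomes $\sum y_i=k-Z-Z(Z-1)=k-Z^2$. By stars and bars,
\[
|G|=\binom{k-Z^2+Z-1}{Z-1}.
\]
For $Z=1$ this equals $1$, consistent with the all-ones-but-one words.

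Combining the two counts gives $Z\,E_{k,Z}=k\binom{k-Z^2+Z-1}{Z-1}$, which is \eqref{eq:E-kZ}. Summing over $Z$ gives \eqref{eq:E-k}. Throughout we use $Z\ge1$, which excludes $t=2^k-1$, and $R\ge Z(Z-1)\ge0$. The word with $R=0$ is excluded since $t\ge1$. That word only fits the conditions when $Z=k$ and $k\le1$, which is impossible for $k\ge2$ (except that $k=1$ is excluded anyway).

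I expect the main care point to be Step 2, specifically checking that the correspondence with positions is genuinely bijective, which it is since $p$ is recorded. Everything else is routine.
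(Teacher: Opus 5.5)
Your proposal is correct and follows essentially the same route as the paper. Both arguments use Theorem~\ref{thm:equality} to reduce to the gap condition, substitute $g_i=(Z-1)+h_i$ to obtain $k\ge Z^2$ and the stars-and-bars count $\binom{k-Z^2+Z-1}{Z-1}$, and then double-count pairs (word, distinguished zero) as $k$ absolute positions times gap vectors before dividing by $Z$. Your extra check that every counted word satisfies $1\le t<2^k-1$ is a harmless, slightly more careful addition.
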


\begin{proof}
Under equality write
\[
 g_i=(Z-1)+h_i,
 \qquad
 h_i\ge0.
\]
Since the $Z$ gaps contain all $R=k-Z$ ones,
\[
 h_1+\cdots+h_Z
 =
 k-Z-Z(Z-1)
 =
 k-Z^2.
\]
Thus equality requires $k\ge Z^2$, and the number of ordered gap vectors with a distinguished zero is
\[
 \binom{k-Z^2+Z-1}{Z-1}.
\]

Count pairs consisting of an equality word and a distinguished zero.
Choosing the absolute position of the distinguished zero gives $k$
choices, and then the ordered gap vector determines the word
uniquely.  On the other hand every word has exactly $Z$ choices of
distinguished zero.  Dividing by $Z$ proves
\eqref{eq:E-kZ}, and summing gives \eqref{eq:E-k}.
\end{proof}

\begin{rem}
The equality criterion and the enumeration formula above were also
obtained by Yuan et al.\
\cite[Theorem~3.1.2 and Corollary~3.1.1]{YuanEtAl2026}, using a
different integral argument.
\end{rem}

\subsection{A lower bound for the positive deficit when $R\ge Z$}

\begin{thm}\label{thm:deficit-bound}
Assume $R\ge Z\ge2$, and suppose that $t$ is not an equality
parameter.  Then
\begin{equation}\label{eq:deficit-bound}
 2^{k-1}-|S_{t,k}|
 \ge
 2^{R-Z+1}.
\end{equation}
Moreover, equality holds in \eqref{eq:deficit-bound} if and only if one of
the following occurs:
\begin{enumerate}[label=\textup{(\roman*)},leftmargin=2.8em]
\item
exactly one cyclic gap has length $Z-2$, and all the remaining cyclic
gaps have length at least $Z-1$;
\item
$Z=3$, exactly one cyclic gap has length $0$, and the other two cyclic
gaps have length at least $2$.
\end{enumerate}
\end{thm}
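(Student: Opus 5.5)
The plan is to read everything off Corollary~\ref{cor:divisibility}. Since $R\ge Z\ge2$ and $m=Z-1$,
\[
 2^{k-1}-|S_{t,k}|=2^{R-Z+1}\sum_{\ell=1}^{m}4^{m-\ell}\Delta_\ell(W),
\]
with every $\Delta_\ell(W)\ge0$.

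For the inequality \eqref{eq:deficit-bound}, note that if $t$ is not an equality parameter then some $\Delta_\ell(W)$ is a positive integer. Hence the sum is at least $1$, which gives \eqref{eq:deficit-bound}.

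For the equality case, the sum equals $1$ exactly when $\Delta_m(W)=1$ and $\Delta_\ell(W)=0$ for all $\ell<m$. This is because each term with $\ell<m$ is a multiple of $4^{m-\ell}\ge4$. So the problem reduces to two things: computing $\Delta_m$ from the gaps, and checking that the lower levels vanish in the claimed cases.

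To compute $\Delta_m$, use Theorem~\ref{thm:top-box}. Since $m<R$, both $h_{m-1,m}(W)$ and $h_{m,m}(W)$ are coefficients of $\prod_i(1+y+\cdots+y^{g_i})$. The unrestricted counts satisfy
\[
 2\binom{2m-1}{m}-\binom{2m}{m}=0.
\]
Let $N_q$ be the number of $\mathbf x\in\N^{m+1}$ with $|\mathbf x|=q$ and $x_i>g_i$ for some $i$. Then
\[
 \Delta_m(W)=N_m-2N_{m-1}.
\]
The relevant cases are computed directly:
\begin{itemize}
\item If exactly one gap equals $m-1$ and the others are $\ge m$, then $N_m=1$ and $N_{m-1}=0$, so $\Delta_m=1$.
\item If one gap equals $m-2$ and the others are $\ge m$, then $N_m=m+1$ and $N_{m-1}=1$, so $\Delta_m=m-1$. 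This equals $1$ only when $m=2$, i.e.\ $Z=3$ with a gap $0$.
\item If two gaps equal $m-1$, then $\Delta_m=2$.
\end{itemize}
For necessity, I would show in general that $\Delta_m\ge2$ whenever the gap configuration lies outside (i) and (ii). Inclusion--exclusion over the set of short gaps gives a single-gap contribution
\[
 \binom{2m-2-g_i}{m-1}-\binom{2m-2-g_i}{m}
\]
for each short gap $g_i<m$. The pair terms, which arise only when two gaps are short enough to overflow simultaneously within total $m$, must then be bounded against these. I would first try to show that $\Delta_m$ is monotone, non-decreasing, when a gap is shortened or a further gap is made short. Equivalently, a direct injection could exhibit two independent violating vectors at level $m$ not accounted for by twice the level-$(m-1)$ violators. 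This monotonicity/inclusion--exclusion estimate is the step I expect to be the main obstacle. The delicate regime is small $m$, where $\binom{2m-2-g_i}{m}$ may vanish or the pair terms become comparable.

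For sufficiency, I would show $\Delta_\ell=0$ for $\ell<m$ in cases (i) and (ii) by mimicking the converse part of Theorem~\ref{thm:equality}, namely proving that $\Lcal(v)\cap\W_{\ell,\ell}$ is full. The first-$\ell$-zeros argument needs the bounds $a_0,\ldots,a_{\ell-1}\ge\ell$ and a final bound $\ge\ell$. Since the cut $W=10v$ needs a gap $\ge1$ in the last position, I would rotate so that the exceptional gap is $g_0$ and retain the last $\ell$ zeros instead. In case (i) the merged run before the first retained zero then has length at least $g_0\ge m-2$, and in fact at least $g_0+g_1\ge\ell$ when $\ell\le m-1$. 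In case (ii) with $\ell=1$, the words $01$ and $10$ are directly visible in $v$. Fullness of the $(\ell,\ell-1)$ level then follows by $1$-deletion closure, exactly as before. Combining these gives $\Delta_m=1$ and $\Delta_\ell=0$ for $\ell<m$, hence equality in \eqref{eq:deficit-bound}.
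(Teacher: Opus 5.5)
Your reduction to the profile $\Delta_m(W)=1$, $\Delta_\ell(W)=0$ for $\ell<m$ is fine, and so are the inequality \eqref{eq:deficit-bound} and the sufficiency direction. Retaining the last $\ell$ zeros with the exceptional gap placed at $g_0$ works as well as the paper's choice of the first $\ell$ zeros with the exceptional gap kept away from the cut.

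\textbf{The gap is in necessity.} The statement you intend to prove, that $\Delta_m(W)\ge2$ for every gap configuration outside (i) and (ii), is false. Take $m\ge2$ and $\mathbf g=(0,\ldots,0,R)$, i.e.\ all ones of $W$ in a single cyclic run. Since $R>m$, both $\Fcal_{m-1}(\mathbf g)$ and $\Fcal_m(\mathbf g)$ consist of the single vector supported on the last coordinate. Hence $\Delta_m(W)=2\cdot1-1=1$; in your notation, $N_m-2N_{m-1}=\bigl(\binom{2m}{m}-1\bigr)-2\bigl(\binom{2m-1}{m}-1\bigr)=1$. Yet this configuration lies in neither (i) nor (ii).

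So necessity cannot be extracted from the top level alone; this configuration must be ruled out at a lower level. The paper does this as follows. With $W=10v$ and $v=0^m1^{R-1}$, one has $01\in D(v)$ while $10\notin D(v)$ and $10\notin\Hcal(v)$. Thus $\Lcal(v)\cap\W_{1,1}$ is nonempty but not full, and Theorem~\ref{thm:deletion-equality} gives $\Delta_1(W)>0$.

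The same phenomenon refutes your proposed monotonicity. For $m=2$, shortening $(0,1,b)$ to $(0,0,b)$ lowers $\Delta_2(W)$ from $2$ to $1$. So the signed pair terms in your inclusion--exclusion genuinely can cancel the single-gap contributions, and no estimate of the form you describe can hold without exceptions.

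For the rest of the classification, the paper avoids signed terms altogether. A marked-insertion count over the words of $\Fcal_{m-1}(\mathbf g)$ gives
$m\Delta_m(W)=\sum_{g_i\le m-1}(g_i+1)\,[y^{m-1-g_i}]\prod_{j\ne i}(1+y+\cdots+y^{g_j})$,
a sum of nonnegative terms. A case split on the number $r$ of positive gaps, using symmetry and unimodality of these products, then shows that $\Delta_m(W)=1$ forces (i), (ii), or $r=1$. The last case is excluded via $\Delta_1(W)>0$ as above.

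To complete your argument you need two things:
\begin{enumerate}[label=\textup{(\alph*)},leftmargin=2.8em]
\item either this nonnegative identity, or a correct version of your pair-term estimate that explicitly allows the exception $r=1$;
\item the lower-level argument that disposes of the single-run case.
\end{enumerate}
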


\begin{proof}
Put $m=Z-1$.  By \eqref{eq:deficit-factor-plus},
\[
 2^{k-1}-|S_{t,k}|
 =
 2^{R-Z+1}
 \sum_{\ell=1}^{m}4^{m-\ell}\Delta_\ell(W).
\]
The quantities $\Delta_\ell(W)$ are nonnegative integers.  Since a
nonequality parameter has at least one positive $\Delta_\ell(W)$, \eqref{eq:deficit-bound} follows, and
equality holds if and only if
\begin{equation}\label{eq:deficit-bound-profile}
 \Delta_m(W)=1,
 \qquad
 \Delta_\ell(W)=0
 \quad(1\le\ell<m).
\end{equation}

We first classify the condition $\Delta_m(W)=1$.  For
$\mathbf g=(g_0,\ldots,g_m)$ put
\[
 \Fcal_q(\mathbf g)
 =
 \left\{
 \mathbf x\in\N^{m+1}:
 |\mathbf x|=q,\ x_i\le g_i\ \text{for every }i
 \right\}.
\]
Since $R\ge m+1$, Theorem~\ref{thm:top-box} gives
\[
 \Delta_m(W)
 =
 2|\Fcal_{m-1}(\mathbf g)|
 -
 |\Fcal_m(\mathbf g)|.
\]
For $\mathbf x\in\Fcal_{m-1}(\mathbf g)$ consider all marked insertions
of one additional $1$.  There are $2m$ insertion slots.  The
$x_i+1$ slots belonging to the $i$th gap are admissible precisely when
$x_i<g_i$.  On the other hand, every word represented by
$\Fcal_m(\mathbf g)$ has $m$ marked deletions.  Hence
\begin{equation}\label{eq:marked-count}
 m\Delta_m(W)
 =
 \sum_{\substack{0\le i\le m\\g_i\le m-1}}
 (g_i+1)N_i,
\end{equation}
where
\begin{equation}\label{eq:Ni}
 N_i
 =
 [y^{m-1-g_i}]
 \prod_{j\ne i}(1+y+\cdots+y^{g_j}).
\end{equation}

We use the elementary fact that a product of polynomials
$1+y+\cdots+y^a$ has a symmetric unimodal coefficient sequence.  This
follows inductively from convolution.  Let
\[
 r=|\{i:g_i>0\}|.
\]
Suppose first that $m\ge3$ and $2\le r\le m$.  For a zero gap $g_i=0$,
the polynomial in \eqref{eq:Ni} has $r$ positive factors and total degree
$R$.  Its coefficient sequence is symmetric and unimodal.  Since
$2\le m-1\le R-2$, both $m-1$ and its symmetric degree
$R-(m-1)$ are at least $2$; hence unimodality gives
\[
 N_i
 \ge
 [y^2]\prod_{g_j>0}(1+y+\cdots+y^{g_j})
 =
 \binom r2+|\{j:g_j\ge2\}|
 \ge
 \binom r2+1,
\]
because $R\ge m+1>r$.  Since there are $m+1-r$ zero gaps, their
contribution to \eqref{eq:marked-count} is at least
\[
 (m+1-r)\left(\binom r2+1\right).
\]
For $r=2$ this is $2(m-1)>m$.  For $r\ge3$ it is at least
\[
 (m+1-r)(r+1)
 =
 m+r(m-r)+1
 >
 m.
\]
Thus $\Delta_m(W)>1$.

If $r=1$, exactly $m$ gaps are zero and the remaining gap has length
$R\ge m+1$.  Each zero gap contributes $1$ to
\eqref{eq:marked-count}, so $\Delta_m(W)=1$.

Now suppose that $m\ge3$ and $r=m+1$, so all gaps are positive.  If
$g_i=m-1$, then $N_i=1$, and the $i$th term of
\eqref{eq:marked-count} is exactly $m$.  Hence
$\Delta_m(W)=1$ is possible only if this is the unique gap with $g_i<m$
and every other gap is at least $m$.  If instead
$1\le g_i\le m-2$, then the polynomial in \eqref{eq:Ni} has $m$
positive factors and, by symmetry and unimodality,
\[
 N_i\ge
 [y]\prod_{j\ne i}(1+y+\cdots+y^{g_j})=m.
\]
Thus the $i$th term alone is at least $2m$, which is impossible when
$\Delta_m(W)=1$.

When $m=2$, the same argument leaves one additional possibility.  If
exactly one gap is zero and the other two are positive, then for the
zero gap
\[
 N_i
 =
 [y](1+y+\cdots+y^{g_j})(1+y+\cdots+y^{g_k})
 =
 2.
\]
It contributes exactly $2=m$ to
\eqref{eq:marked-count}.  Thus $\Delta_2(W)=1$ occurs in this
case precisely when the other two gaps are at least $2$.  For $m=1$,
formula \eqref{eq:marked-count} shows directly that
$\Delta_1(W)=1$ precisely when exactly one of the two cyclic gaps is
zero.

We have therefore shown that $\Delta_m(W)=1$ can occur only in the
following three situations:
\[
\begin{split}
&\text{one gap is }m-1\text{ and all the others are at least }m;\\
&m=2,\ \text{one gap is }0\text{ and the other two are at least }2;\\
&m\ge2,\ \text{all but one gap are }0.
\end{split}
\]
The last situation cannot satisfy \eqref{eq:deficit-bound-profile}.  Indeed,
then all ones of $W$ form a single cyclic run, and after a cyclic
rotation
\[
 W=10v,
 \qquad
 v=0^m1^{R-1}.
\]
Thus $01\in D(v)$, whereas $10\notin D(v)$ and
$10>_{\rm lex}v$, so $10\notin\Lcal(v)$.  Hence
$\Lcal(v)\cap\W_{1,1}$ is nonempty but not full.  By
Theorem~\ref{thm:deletion-equality}, $\Delta_1(W)>0$, contradicting
\eqref{eq:deficit-bound-profile}.

It remains to verify that the two stated families attain the bound.
For (i), choose the cyclic rotation $W=10v$ so that the unique gap of
length $m-1$ does not cross the cut.  The set in Theorem~\ref{thm:top-box} is then the full degree-$m$
level with exactly one vector $m e_i$ removed, while the degree-$(m-1)$
level is full.  Hence $\Delta_m(W)=1$.  Every bound relevant to a level $\ell<m$ is at least
$m-1\ge\ell$, so
Lemma~\ref{lem:coarse} shows that $\Lcal(v)\cap\W_{\ell,\ell}$ is full for every
$\ell<m$; deleting one $1$ gives the corresponding level
$\Lcal(v)\cap\W_{\ell,\ell-1}$ as well.  Thus
$\Delta_\ell(W)=0$ for all $\ell<m$.

For (ii), $m=2$ and, up to cyclic rotation, the gap vector is
$(0,a,b)$ with $a,b\ge2$.  We may write
\[
 W=10v,
 \qquad
 v=1^b00\,1^{a-1}.
\]
Both $01$ and $10$ occur as subsequences of $v$, so
\[
 \Lcal(v)\cap\W_{1,1}=\W_{1,1},
 \qquad
 \Lcal(v)\cap\W_{1,0}=\W_{1,0}.
\]
Thus $\Delta_1(W)=0$, while the classification above gives
$\Delta_2(W)=1$.  Hence \eqref{eq:deficit-bound-profile} holds.  This
proves the theorem.
\end{proof}

\subsection{Further results when $R<Z$}

For a cyclic binary word $W$, let $\overline{W}$ denote its bitwise
complement.

\begin{lem}\label{lem:complement-symmetry}
For every cyclic binary word $W$,
\[
 H_{\overline{W}}(u,v)=H_W(v,u).
\]
Equivalently,
\[
 h_{p,q}(\overline{W})=h_{q,p}(W)
 \qquad(p,q\ge0).
\]
\end{lem}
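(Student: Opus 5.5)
The plan is to work directly from Cusick's defining identity \eqref{eq:Cusick-H}, since $H_W$ is determined there by the trace of the cyclic matrix product. The key ingredient is a conjugation identity relating the matrix of a letter $b$ to the matrix of its complement $1-b$ with the variables swapped. This is the same device as in Lemma~\ref{lem:matrix-conj}, where $P$ was used to pass between $A_b$ and $C_b$.

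First I will verify, by direct multiplication, that
\[
 P\,C_b(v,u)\,P = C_{1-b}(u,v)\qquad(b\in\{0,1\}).
\]
Conjugation by $P$ sends $\begin{pmatrix}\alpha&\beta\\\gamma&\delta\end{pmatrix}$ to $\begin{pmatrix}\delta&\gamma\\\beta&\alpha\end{pmatrix}$. For $b=0$ this takes $C_0(v,u)=\begin{pmatrix}1&0\\v&u\end{pmatrix}$ to $\begin{pmatrix}u&v\\0&1\end{pmatrix}=C_1(u,v)$. The case $b=1$ is the same computation read in reverse. Since $P^2=I$, multiplying these identities over the letters $\eps_0,\ldots,\eps_{k-1}$ of $W$ gives
\[
 M_{\overline W}(u,v)=P\,M_W(v,u)\,P,
\]
and therefore $\operatorname{tr}M_{\overline W}(u,v)=\operatorname{tr}M_W(v,u)$.

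Next I will compare the two instances of \eqref{eq:Cusick-H}. The word $\overline W$ has $Z$ ones and $R$ zeros, so its defining identity reads
\[
 \operatorname{tr}M_{\overline W}(u,v)-u^{Z}v^{R}=1+(u+v-1)H_{\overline W}(u,v).
\]
Substituting $(u,v)\mapsto(v,u)$ in the identity for $W$ gives
\[
 \operatorname{tr}M_W(v,u)-v^{R}u^{Z}=1+(v+u-1)H_W(v,u).
\]
The left-hand sides coincide, so subtracting leaves $(u+v-1)\bigl(H_{\overline W}(u,v)-H_W(v,u)\bigr)=0$. Because $u+v-1$ is a nonzero element of the integral domain $\Z[u,v]$, the factor can be cancelled, giving the first claimed identity. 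Comparing coefficients of $u^pv^q$ then gives $h_{p,q}(\overline W)=h_{q,p}(W)$.

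The only point needing care is the exceptional monomial: after the variable swap, $u^Rv^Z$ for $W$ must match $u^{Z}v^{R}$ for $\overline W$, and it does because complementation interchanges $R$ and $Z$. No step is a real obstacle; the argument is a short computation, parallel to the proof of Theorem~\ref{thm:bridge}.
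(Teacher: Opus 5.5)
Your proposal is correct and follows the paper's proof: the conjugation identity $C_{1-b}(u,v)=P\,C_b(v,u)\,P$, the resulting equality $\operatorname{tr}M_{\overline W}(u,v)=\operatorname{tr}M_W(v,u)$, and a comparison of the two instances of \eqref{eq:Cusick-H}, using that complementation swaps $R$ and $Z$. Your explicit cancellation of the nonzero factor $u+v-1$ in $\Z[u,v]$ spells out a step the paper leaves implicit.
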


\begin{proof}
Recall
\[
 P=
 \begin{pmatrix}
 0&1\\
 1&0
 \end{pmatrix}.
\]
A direct calculation gives
\[
 C_{1-b}(u,v)=P C_b(v,u)P
 \qquad(b\in\{0,1\}).
\]
Hence
\[
 M_{\overline{W}}(u,v)=P M_W(v,u)P,
\]
and therefore
\[
 \operatorname{tr}M_{\overline{W}}(u,v)
 =
 \operatorname{tr}M_W(v,u).
\]
Complementation interchanges the numbers of zeros and ones, so comparing
both sides with \eqref{eq:Cusick-H} gives
\[
 H_{\overline{W}}(u,v)=H_W(v,u).
\]
The coefficient identity follows immediately.
\end{proof}

Assume now that $R<Z$. Let
$z_1,\ldots,z_R$ be the cyclic numbers of zeros between successive ones of $W$, and put
\[
 s=|\{i:z_i>0\}|.
\]
Thus $s$ is the number of cyclic runs of $1$'s in $W$ and
\[
 z_1+\cdots+z_R=Z.
\]

\begin{thm}\label{thm:low-weight}
Assume $R<Z$.  Then the following statements hold.
\begin{enumerate}[label=\textup{(\roman*)},leftmargin=2.8em]
\item
There is a positive integer $M_-(W)$ such that
\begin{equation}\label{eq:low-arithmetic}
 2^{k-1}-|S_{t,k}|
 =
 2^{Z-R}M_-(W)-1,
\end{equation}
where explicitly
\begin{equation}\label{eq:M-minus}
 M_-(W)
 =
 h_{R-1,R}(W)
 +
 2\sum_{\ell=1}^{R-1}
 4^{R-\ell-1}\Delta_\ell(W).
\end{equation}
In particular,
\begin{equation}\label{eq:low-congruence}
 |S_{t,k}|\equiv1\pmod{2^{Z-R}}.
\end{equation}

\item
The quantity $\Delta_R(W)$ satisfies
\begin{equation}\label{eq:low-coeff}
 \Delta_R(W)
 =
 2[y^R]
 \prod_{i=1}^{R}(1+y+\cdots+y^{z_i}).
\end{equation}

\item
If $s$ is the number of cyclic runs of $1$'s in $W$, then
\begin{equation}\label{eq:low-run-bound}
 2^{k-1}-|S_{t,k}|
 \ge
 2^{Z-R}
 \sum_{r=0}^{\min\{Z-R,s-1\}}
 \binom{s-1}{r}
 -1.
\end{equation}
\end{enumerate}
\end{thm}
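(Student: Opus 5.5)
The plan is to derive everything from the factored deficit formula \eqref{eq:deficit-factor-minus}, combined with the bridge identity (Theorem~\ref{thm:bridge}) and the complement symmetry (Lemma~\ref{lem:complement-symmetry}).

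For (i), fix a rotation $W=10v$. Since $v$ has $R-1$ ones, Lemma~5.4 of \cite{LiuLuoXie2026} (as in the proof of Proposition~\ref{prop:deficit}) gives $h_{R,R}(W)=j_{R,R}(v)=0$. Hence
\[
\Delta_R(W)=2h_{R-1,R}(W).
\]
Substituting this into \eqref{eq:deficit-factor-minus}, the sum $\sum_{\ell=1}^R4^{R-\ell}\Delta_\ell(W)$ becomes
\[
2h_{R-1,R}(W)+\sum_{\ell<R}4^{R-\ell}\Delta_\ell(W)=2M_-(W),
\]
with $M_-(W)$ as in \eqref{eq:M-minus}. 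Multiplying by $2^{Z-R-1}$ gives \eqref{eq:low-arithmetic}. Positivity follows because $h_{R-1,R}(W)=j_{R,R-1}(v)\ge1$: the word $v$ has $Z-1\ge R$ zeros and $R-1$ ones, so it contains a subsequence with $R$ zeros and $R-1$ ones, exactly as in Lemma~\ref{lem:low-no-equality}. The remaining terms of $M_-(W)$ are nonnegative by $\Delta_\ell\ge0$. The congruence \eqref{eq:low-congruence} then follows from $Z-R\ge1$.

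For (ii), Lemma~\ref{lem:complement-symmetry} gives $h_{R-1,R}(W)=h_{R,R-1}(\overline W)$. The complement $\overline W$ has $Z$ ones and $R$ zeros, so the roles are reversed and we are in the range ``ones $\ge$ zeros''. Its gaps between successive zeros are exactly $z_1,\dots,z_R$, and the top zero level is $m=R-1$. We apply Theorem~\ref{thm:top-box} to $\overline W$ with $q=R$; this is admissible because $q=R<Z$, which is the number of ones of $\overline W$. It gives
\[
h_{R,R-1}(\overline W)=[y^R]\prod_{i=1}^R(1+y+\cdots+y^{z_i}),
\]
and hence \eqref{eq:low-coeff}. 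Theorem~\ref{thm:top-box} is stated for at least $2$ zeros, so the case $R=1$ has to be checked by hand. There $\overline W$ has one zero, the level is $m=0$, and the proof of Theorem~\ref{thm:top-box} goes through verbatim, giving the coefficient $1=[y^1](1+\cdots+y^{Z})$.

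For (iii), we drop the nonnegative $\Delta_\ell$ terms in $M_-(W)$, so $M_-(W)\ge h_{R-1,R}(W)=P_R$, where $P(y)=\prod_{z_i>0}(1+\cdots+y^{z_i})$. Only the $s$ positive factors matter, and $P$ has degree $Z$ and is symmetric, so $P_R=P_d$ with $d=Z-R$. It remains to show
\[
P_d\ge\sum_{r\le\min\{d,s-1\}}\binom{s-1}{r}.
\]
To do this we index the positive factors so that the last one is special. For each subset $A\subseteq\{1,\dots,s-1\}$ with $|A|=r\le d$, we construct an exponent vector with $x_i=1$ for $i\in A$ and total $d$. The first attempt will be to put the excess $d-r$ onto the $s$th factor, and, if its capacity is too small, to overflow into the factors of $A$ (raising them above $1$), never touching factors outside $A$. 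Injectivity should then come from reading off $A$ as the support within the first $s-1$ factors. Because the overflow lands only in factors of $A$, the support in the first $s-1$ factors is exactly $A$ in every case, so distinct subsets give distinct vectors.

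The main obstacle is showing that this overflow is always possible. The available capacity is $z_s+\sum_{i\in A}(z_i-1)$, and this may be smaller than $d-r$ when the factors in $A$ are small. If that happens, I would instead exploit symmetric unimodality of $P$, noting that $\min(R,d)$ is at most the centre of $P$. Alternatively, I would choose the special factor to be the one with the largest $z_i$, and handle the bad subsets by a complementary shift, for example replacing the target degree $d$ by $R$ through the symmetry $P_R=P_d$.
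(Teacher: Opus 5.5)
Your parts (i) and (ii) follow the paper's argument: $j_{R,R}(v)=0$ and $j_{R,R-1}(v)\ge1$ give $\Delta_R(W)=2h_{R-1,R}(W)$, which you substitute into \eqref{eq:deficit-factor-minus}. You then read off $h_{R-1,R}(W)=h_{R,R-1}(\overline W)$ from Theorem~\ref{thm:top-box} applied to $\overline W$ at $q=R<Z$. Treating $R=1$ as the $m=0$ case of that computation is fine; the paper instead checks $\Delta_1(W)=2$ directly from $v=0^{Z-1}$. For the congruence, the fact actually needed is $2^{Z-R}\mid 2^{k-1}$, which holds because $R\ge1$.

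The gap is in (iii). You correctly reduce it to $P_R=P_d\ge\sum_{r=0}^{\min\{d,s-1\}}\binom{s-1}{r}$. However, the overflow injection does not exist in general, and neither fallback repairs it. Take $W=100100100$, so $R=3$, $Z=6$, $z_1=z_2=z_3=2$, $s=3$ and $d=3$. For $A=\varnothing$ you need $x_1=x_2=0$ with total $3$, hence $x_3=3>z_3$, and $A$ contains no factor to take the overflow. Since all gaps are equal, making the largest factor special changes nothing. Since $R=d$, passing from $P_d$ to $P_R$ does not change the target. The inequality itself, $P_3=7\ge4$, is true here; your construction just does not reach it. Appealing to symmetric unimodality does not close the gap either. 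It only compares coefficients of $P$ with each other, so you would still need a lower bound of the right size for some coefficient, which is the original problem.

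What is missing is a coefficientwise comparison of $P$ with a product whose coefficients are immediate. With $S_a(y)=1+y+\cdots+y^a$ one has $S_aS_b-S_{a-1}S_{b+1}=y^a+\cdots+y^b$ for $1\le a\le b$. Applying this repeatedly to the positive gaps, always using the current largest gap as $b$, gives
\[
\prod_{i=1}^{R}S_{z_i}(y)\succeq S_{Z-s+1}(y)\,(1+y)^{s-1}.
\]
Because $s\le R$, the coefficient of $y^R$ on the right is $\sum_{r=\max\{0,s-1-d\}}^{s-1}\binom{s-1}{r}=\sum_{r=0}^{\min\{d,s-1\}}\binom{s-1}{r}$. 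This is how the paper proves (iii). For a bijective version, realize each compression step by an injection. Send an integer pair $(x,y)$ with $x\le a-1$, $y\le b+1$ to itself when $y\le b$, and to $(a,\,x+b+1-a)$ when $y=b+1$. Composing these maps, rather than using a single overflow rule decoded by support, gives the injection you were after.
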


\begin{proof}
As in the proof of Proposition~\ref{prop:deficit}, the bound
$\deg_YJ_v\le R-1$ gives
\[
 j_{R,R}(v)=0.
\]
Moreover, $v$ has $R-1$ ones and $Z-1\ge R$ zeros, so $D(v)$ contains
at least one word with $R$ zeros and $R-1$ ones. Hence
$j_{R,R-1}(v)\ge1$.
By Theorem~\ref{thm:bridge},
\[
 h_{R-1,R}(W)=j_{R,R-1}(v),
 \qquad
 h_{R,R}(W)=j_{R,R}(v)=0,
\]
and therefore
\begin{equation}\label{eq:last-term}
 \Delta_R(W)=2h_{R-1,R}(W).
\end{equation}
Using Corollary~\ref{cor:divisibility}, we obtain
\begin{align*}
 2^{k-1}-|S_{t,k}|+1
 =
 2^{Z-R-1} \sum_{\ell=1}^{R}4^{R-\ell}\Delta_\ell(W)=
 2^{Z-R}
 \left(
 h_{R-1,R}(W)
+2\sum_{\ell=1}^{R-1}4^{R-\ell-1}\Delta_\ell(W)
 \right).
\end{align*}
The expression in parentheses is the positive integer $M_-(W)$ in
\eqref{eq:M-minus}, proving \eqref{eq:low-arithmetic} and
\eqref{eq:low-congruence}.

We next prove \eqref{eq:low-coeff}.  If $R=1$, then after choosing
$W=10v$ one has $v=0^{Z-1}$, so $j_{1,0}(v)=1$ and $j_{1,1}(v)=0$;
hence $\Delta_1(W)=2$, which is exactly \eqref{eq:low-coeff}.
Assume $R\ge2$.  The complemented word $\overline{W}$ has $Z$ ones and
$R$ zeros.  Its gap lengths are precisely
$z_1,\ldots,z_R$.  Applying Theorem~\ref{thm:top-box} to
$\overline{W}$ with highest level $R-1$ and with $q=R<Z$ gives
\[
 h_{R,R-1}(\overline{W})
 =
 [y^R]\prod_{i=1}^{R}(1+y+\cdots+y^{z_i}).
\]
Lemma~\ref{lem:complement-symmetry} gives
\[
 h_{R,R-1}(\overline{W})=h_{R-1,R}(W).
\]
Combining this with \eqref{eq:last-term} proves
\eqref{eq:low-coeff}.

It remains to prove \eqref{eq:low-run-bound}.  Write
\[
 S_a(y)=1+y+\cdots+y^a.
\]
Exactly $s$ of the zero-gaps are positive.  If $2\le a\le b$, then
\begin{equation}\label{eq:gap-compression}
 S_a(y)S_b(y)-S_{a-1}(y)S_{b+1}(y)
 =
 y^a+y^{a+1}+\cdots+y^b,
\end{equation}
which has nonnegative coefficients.  Repeatedly applying
\eqref{eq:gap-compression} to the positive zero-gaps shows coefficientwise
that
\[
 \prod_{i=1}^{R}S_{z_i}(y)
 \succeq
 S_{Z-s+1}(y)(1+y)^{s-1},
\]
where $\succeq$ denotes coefficientwise domination; that is,
$A(y)\succeq B(y)$ means that every coefficient of $A(y)-B(y)$
is nonnegative.  Put $d=Z-R$.
Since $s\le R$, coefficient extraction gives
\begin{align*}
 [y^R]S_{Z-s+1}(y)(1+y)^{s-1}
 =
 \sum_{r=\max\{0,s-1-d\}}^{s-1}
 \binom{s-1}{r}=
 \sum_{r=0}^{\min\{d,s-1\}}
 \binom{s-1}{r}.
\end{align*}
Thus \eqref{eq:low-coeff} yields
\[
 \Delta_R(W)
 \ge
 2\sum_{r=0}^{\min\{Z-R,s-1\}}
 \binom{s-1}{r}.
\]
Finally, all $\Delta_\ell(W)$ are nonnegative, and hence
\[
 2^{k-1}-|S_{t,k}|+1
 =
 2^{Z-R-1}
 \sum_{\ell=1}^{R}4^{R-\ell}\Delta_\ell(W)
 \ge
 2^{Z-R-1}\Delta_R(W).
\]
This proves \eqref{eq:low-run-bound}.
\end{proof}

\begin{rem}\label{rem:low-special-cases}
Theorem~\ref{thm:low-weight} gives, in particular,
\[
 2^{k-1}-|S_{t,k}|
 \ge
 2^{Z-R}-1,
\]
so equality in the Tu--Deng bound is impossible when $R<Z$.  More
precisely, if $Z=R+1$, then
\[
 2^{k-1}-|S_{t,k}|\ge2s-1,
\]
while if $Z-R\ge s-1$, then
\[
 2^{k-1}-|S_{t,k}|
 \ge
 2^{Z-R+s-1}-1.
\]
\end{rem}

\begin{cor}\label{cor:div}
For $R\ge Z$,
\[
 2^{R-Z+1}
 \mid
 \bigl(2^{k-1}-|S_{t,k}|\bigr).
\]
For $R<Z$,
\[
 2^{Z-R}
 \mid
 \bigl(2^{k-1}-|S_{t,k}|+1\bigr).
\]
\end{cor}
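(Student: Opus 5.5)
This corollary should follow directly from the factored deficit formulas in Corollary~\ref{cor:divisibility} and from Theorem~\ref{thm:low-weight}(i). The one thing to check is that the cofactor multiplying the power of $2$ is an integer in each case.

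\textbf{Case $R\ge Z$.} If $Z=1$, the sum in \eqref{eq:deficit-factor-plus} is empty, so both sides vanish and divisibility holds trivially. For $Z\ge2$, with $m=Z-1$, formula \eqref{eq:deficit-factor-plus} writes $2^{k-1}-|S_{t,k}|$ as $2^{R-Z+1}$ times $\sum_{\ell=1}^{m}4^{m-\ell}\Delta_\ell(W)$. This sum is an integer, because each $\Delta_\ell(W)=2j_{\ell,\ell-1}(v)-j_{\ell,\ell}(v)$ is an integer combination of counts and $m-\ell\ge0$. Hence $2^{R-Z+1}$ divides the deficit.

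\textbf{Case $R<Z$.} Here I would invoke \eqref{eq:low-arithmetic}, which gives $2^{k-1}-|S_{t,k}|+1=2^{Z-R}M_-(W)$. By \eqref{eq:M-minus}, $M_-(W)$ is an integer: $h_{R-1,R}(W)$ is a coefficient count, and the exponents $R-\ell-1$ are nonnegative for $1\le\ell\le R-1$. The divisibility follows. Equivalently, one can start from \eqref{eq:deficit-factor-minus} and absorb the extra factor of $2$ supplied by $\Delta_R(W)=2h_{R-1,R}(W)$ from \eqref{eq:last-term}.

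The only point needing care is the integrality of the exponents and the degenerate case $Z=1$; beyond that there is no real obstacle.
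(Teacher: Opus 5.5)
Your proposal is correct and matches the paper's proof: the case $R\ge Z$ is read off from \eqref{eq:deficit-factor-plus} in Corollary~\ref{cor:divisibility}, and the case $R<Z$ from \eqref{eq:low-arithmetic} in Theorem~\ref{thm:low-weight}(i). Your added check that the cofactors $\sum_{\ell}4^{m-\ell}\Delta_\ell(W)$ and $M_-(W)$ are integers is the only content the paper leaves implicit.
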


\begin{proof}
The first assertion is Corollary~\ref{cor:divisibility}.  The second is
\eqref{eq:low-arithmetic}.
\end{proof}

\section{Concluding remarks}

We have given a combinatorial proof of the equality criterion for the
Tu--Deng bound: if the cyclic binary word of $t$ contains $Z$ zeros,
then equality holds exactly when every run of ones between consecutive
zeros has length at least $Z-1$.  This is the condition conjectured by
Flori, Randriambololona, Cohen and Mesnager and also proved,
together with the same enumeration formula, by Yuan et al.\
\cite{YuanEtAl2026}.

The coefficient description also gives information beyond equality.
When $R\ge Z$, the deficit $2^{k-1}-|S_{t,k}|$ is divisible by
$2^{R-Z+1}$.  For $R\ge Z\ge2$, every nonequality parameter satisfies
\[
 2^{k-1}-|S_{t,k}|\ge2^{R-Z+1},
\]
and Theorem~\ref{thm:deficit-bound} classifies all parameters for which
this lower bound is attained.  When $R<Z$,
Theorem~\ref{thm:low-weight} gives
\[
 2^{k-1}-|S_{t,k}|=2^{Z-R}M_-(W)-1
\]
with $M_-(W)>0$, together with a lower bound depending on the number of
cyclic runs of ones.

Determining the exact minimum positive deficit for fixed $R$ and $Z$
in the cases not covered by the equality statement in
Theorem~\ref{thm:deficit-bound}, and in particular minimizing $M_-(W)$
when $R<Z$, remains open.


\begin{thebibliography}{99}

\bibitem{Cheng2026}
K. Cheng,
A first-exit proof of Cusick's sum-of-digits conjecture,
arXiv:2606.23398, 2026.

\bibitem{ChengHongZhong2015}
K. Cheng, S. Hong and Y. Zhong,
A note on the Tu--Deng conjecture,
\emph{J. Syst. Sci. Complex.} \textbf{28} (2015), no.~3, 702--724,
DOI 10.1007/s11424-015-2240-3.

\bibitem{ChenLinWei2020}
Y. Chen, L. Lin and C. Wei,
About the Tu--Deng conjecture for $w(t)$ less than or equal to $10$,
\emph{IACR Cryptol. ePrint Arch.} 2020, Paper No.~227.

\bibitem{Cusick2026}
T. W. Cusick,
Proof of the Tu--Deng conjecture,
arXiv:2608.14821, 2026.

\bibitem{CusickLiStanica2011}
T. W. Cusick, Y. Li and P. St\u{a}nic\u{a},
On a combinatorial conjecture,
\emph{Integers} \textbf{11} (2011), Paper No.~A17.

\bibitem{DengYuan2012}
G. Deng and P. Yuan,
On a combinatorial conjecture of Tu and Deng,
\emph{Integers} \textbf{12} (2012), Paper No.~A48.

\bibitem{FloriRandriamCohenMesnager2010}
J.-P. Flori, H. Randriambololona, G. Cohen and S. Mesnager,
On a conjecture about binary strings distribution,
in: \emph{Sequences and Their Applications--SETA 2010},
Lecture Notes in Comput. Sci. \textbf{6338},
Springer, Berlin, 2010, pp.~346--358,
DOI 10.1007/978-3-642-15874-2\_30.

\bibitem{LiuLuoXie2026}
R. Liu, H. Luo and T. Xie,
A complete proof for Tu--Deng conjecture,
arXiv:2608.05187, 2026.

\bibitem{SpiegelhoferWallner2019}
L. Spiegelhofer and M. Wallner,
The Tu--Deng conjecture holds almost surely,
\emph{Electron. J. Combin.} \textbf{26} (2019), no.~1,
Paper No.~P1.28.


\bibitem{YuanEtAl2026}
Y. Yuan, Y. Hu, Y. Zhang, L. Zhang and W. Wu,
Theoretical open problems in symmetric cryptography: verifiable
LLM-guided analysis,
\emph{IACR Cryptol. ePrint Arch.} 2026, Paper No.~1687.

\bibitem{TuDeng2011}
Z. Tu and Y. Deng,
A conjecture about binary strings and its applications on constructing
Boolean functions with optimal algebraic immunity,
\emph{Des. Codes Cryptogr.} \textbf{60} (2011), no.~1, 1--14.

\end{thebibliography}
\end{document}